\newcommand{\soln}{u}
\newcommand{\dsoln}{\dot u}
\newcommand{\solnpjk}[3]{u_{#2}^{#1,\{#3\}}}
\newcommand{\fsolnp}[1]{U^{#1}}
\newcommand{\fsolnj}[1]{U_{#1}}
\newcommand{\fsolnpk}[2]{U^{#1,\{#2\}}}
\newcommand{\fsolnpjk}[3]{U_{#2}^{#1,\{#3\}}}
\newcommand{\fsolnjk}[2]{U_{#1}^{\{#2\}}}
\newcommand{\fsolnCG}{U_{CG}}
\newcommand{\dfsolnCG}{\dot{U}_{CG}}
\newcommand{\dfsolnDG}{\dot{U}_{DG}}
\newcommand{\fsolnDGp}[1]{U_{DG}^{#1}}
\newcommand{\fsolnDG}{U_{DG}}
\newcommand{\Var}{\mathrm{Var}}
\newcommand{\fsolnVAR}{U_{\Var}}
\newcommand{\ferrVAR}{e_{\Var}}
\newcommand{\VARrespuphi}[3]{\mathcal{R}_{\Var}^{#1}(#2,#3)}
\newcommand{\fsolnkl}[2]{U^{\{#1,#2\}}}
\newcommand{\fsolnpkl}[3]{U^{#1,\{#2,#3\}}}
\newcommand{\fsolnpjkl}[4]{U_{#2}^{#1,\{#3, #4\}}}
\newcommand{\fsolnk}[1]{U^{\{#1\}}}
\newcommand{\fsolpnkidd}[4]{\widetilde{U}_{#2,#3}^{#1,\{#4\}} }
\newcommand{\fsolnikdd}[2]{\widetilde{U}_{n,#1}^{\{#2\}} }
\newcommand{\csoln}{\widehat{U}}
\newcommand{\ferrDG}{e_{DG}}
\newcommand{\csolnp}[1]{\widehat{U}^{#1}}
\newcommand{\csolnpk}[2]{\widehat{U}^{#1,\{#2\}}}
\newcommand{\csolnk}[1]{\widehat{U}^{\{#1\}}}
\newcommand{\csolnj}[1]{\widehat{U}_{#1}}
\newcommand{\ferrp}[1]{e^{#1}}
\newcommand{\ferrk}[1]{e^{\{#1\}}}
\newcommand{\ferrkl}[2]{e^{\{#1,#2\}}}
\newcommand{\ferrpk}[2]{e^{#1, \{#2\}  }}
\newcommand{\cerrp}[1]{\widehat{e}^{#1}}
\newcommand{\cerrk}[1]{\widehat{e}^{\{#1\}}}
\newcommand{\cerrpk}[2]{\widehat{e}^{#1, \{#2\}  }}
\newcommand{\adjoint}{\phi}
\newcommand{\fadj}{\phi}
\newcommand{\fadjp}[1]{\phi^{#1}}
\newcommand{\dfadjp}[1]{\dot{\phi}^{#1}}
\newcommand{\cadj}{\widehat{\phi}}
\newcommand{\adjdd}[2]{\phi_{{#2}}^{#1}}
\newcommand{\adjddadd}[4]{\phi_{{#2},#4}^{#1,[#3]}}
\newcommand{\ddpsi}[3]{\fadjp{#1}(t_{#2})|_{\Omega_{#3}}}
\newcommand{\auxadjp}[1]{\widehat{\phi}_{\rm aux}^{#1}}
\newcommand{\dauxadjp}[1]{\dot{\widehat{\phi}}_{\rm aux}^{#1}}
\newcommand{\frespuphi}[3]{\mathcal{R}^{#1}(#2,#3)}
\newcommand{\frespnarg}[4]{\mathcal{R}^{#1}_{#2}(#3, #4)}
\newcommand{\crespnarg}[4]{\widehat{\mathcal{R}}^{#1}_{#2}(#3, #4)}
\newcommand{\crespuphi}[3]{\widehat{\mathcal{R}}^{#1}(#2,#3)}
\newcommand{\Nftime}{N_t}
\newcommand{\Nftimep}[1]{N^{#1}_t}
\newcommand{\Nctime}{\widehat{N}_t}
\newcommand{\Nctimep}[1]{\widehat{N}^{#1}_t}
\newcommand{\Qftime}{q_t}
\newcommand{\Qctime}{\widehat{q}_t}
\newcommand{\Nfspace}{N_s}
\newcommand{\Ncspace}{\widehat{N}_s}
\newcommand{\Qfspace}{q_s}
\newcommand{\Qcspace}{\widehat{q}_s}
\newcommand{\Vspace}[1]{{V}_s^{#1}}
\newcommand{\Vspacetime}[3]{{W}_{p,#1}^{#2,#3}}
\newcommand{\Vspacedd}[2]{V_{{#2}}^{#1}}
\newcommand{\Vspaceddbc}[3]{V_{{#2},{#3}}^{#1}}
\newcommand{\NParSub}{P_t}
\newcommand{\NDDSub}{P_s}
\newcommand{\NParIte}{K_t}
\newcommand{\NDDIte}{K_s}
\newcommand{\ftimepj}[2]{t^{#1}_{#2}}
\newcommand{\Intfpj}[2]{\mathcal{I}^{#1}_{#2}}
\newcommand{\Taufp}[1]{\mathcal{T}^{#1}}
\newcommand{\fdTpj}[2]{\Delta t^{#1}_{#2}}
\newcommand{\cdTpj}[2]{\Delta \hat{t}^{#1}_{#2}}
\newcommand{\ctimepj}[2]{\widehat{t}^{#1}_{#2}}
\newcommand{\Intcpj}[2]{\widehat{\mathcal{I}}^{#1}_{#2}}
\newcommand{\Taucp}[1]{\widehat{\mathcal{T}}^{#1}}
\newcommand{\fsolvernoargs}{F^p}
\newcommand{\fsolver}[2]{F^p[#1] (#2)}
\newcommand{\finv}[3]{\langle #1 \rangle_{ {\mathcal{I}}^{{#2}}_{#3} }  }
\newcommand{\csolvernoargs}{\widehat{G}}
\newcommand{\csolver}[2]{\widehat{G}^p[#1] (#2)}
\newcommand{\cinv}[3]{\langle #1 \rangle_{ \widehat{\mathcal{I}}^{#2}_{#3} }}
\newcommand{\fterror}{\mathcal{D}}
\newcommand{\auxerror}{\mathcal{A}}
\newcommand{\cterror}{\mathcal{C}}
\newcommand{\Kterror}{\mathcal{K}}
\newcommand{\fterrft}{\mathcal{D}_t}
\newcommand{\fterrNs}{\mathcal{D}_s}
\newcommand{\fterrKs}{\mathcal{D}_k}
\newcommand{\fterrftp}[1]{\mathcal{D}_t^{#1}}
\newcommand{\fterrNsp}[1]{\mathcal{D}_s^{#1}}
\newcommand{\fterrKsp}[1]{\mathcal{D}_k^{#1}}
\newcommand{\fterrNspn}[2]{\mathcal{D}_{s,#2}^{#1}}
\newcommand{\fterrKspn}[2]{\mathcal{D}_{k,#2}^{#1}}
\newcommand{\corr}[2]{C^{#1,\{#2\}}}
\newcommand{\reffactime}{r}
\newcommand{\lspace}[1]{l^{#1}}
\newtheorem{thm}{Theorem}
\newtheorem{lem}{Lemma}
\begin{abstract}
% a
% \end{abstract}
\title{A posteriori error analysis for a space-time parallel discretization of parabolic partial differential equations}
\newcommand\shorttitle{Error analysis for space-time parallel discretization}
\author[1]{Jehanzeb H. Chaudhry}
\author[2]{Don Estep}
\author[3]{Simon Tavener}
\affil[1]{Department of Mathematics and Statistics, University of New Mexico.   Email: \url{jehanzeb@unm.edu}}
\affil[2]{Department of Statistics and Actuarial Science, Simon Fraser University.  Email: \url{tavener@math.colostate.edu}}
\affil[3]{Department of Mathematics, Colorado State University. Email: \url{donald_estep@sfu.ca}}
\providecommand{\keywords}[1]{\textbf{\textit{Keywords 	---}} #1}
\begin{document}
\maketitle{}

\begin{abstract}
We construct a space-time parallel method for solving parabolic partial differential equations by coupling the Parareal algorithm in time with overlapping domain decomposition in space. The goal is to obtain a discretization consisting of ``local'' problems that can be solved on parallel computers efficiently. However, this introduces significant sources of error that must be evaluated. Reformulating the original Parareal algorithm as a variational method and implementing a finite element discretization in space enables an adjoint-based \emph{a posteriori} error analysis to be performed. Through an appropriate choice of adjoint problems and residuals the error analysis distinguishes between errors arising due to the temporal and spatial discretizations, as well as between the errors arising due to incomplete Parareal iterations and incomplete iterations of the domain decomposition solver. We first develop an error analysis for the Parareal method applied to  parabolic partial differential equations, and then refine this analysis to the case where the associated spatial problems are solved using overlapping domain decomposition. These constitute our Time Parallel Algorithm (TPA) and Space-Time Parallel Algorithm (STPA) respectively.  Numerical experiments demonstrate the accuracy of the estimator for both algorithms and the iterations between distinct components of the error. 
\end{abstract}

\keywords{A posteriori error analysis, adjoint based error estimation, parabolic partial differential equations, Parareal, domain decomposition, Schwarz algorithms, time-parallel.}

% \hfill Date \& time: \timestamp
\section{Introduction}
\label{sec:introduction}

Parallel computing approaches for solving complex problems modeled by partial differential equations have become increasingly attractive over the last decade, as Moore's Law has continued, but transitioned from fabricating  more transistors on a chip to constructing  more cores. Spatial parallelization of partial differential equations is well established \cite{chan1994domain, dolean2015introduction,  lions1990schwarz, lions1988schwarz, xu1992iterative}, but the improvement available through spatial parallelization alone typically decays as the number of processors increases for any fixed-size problem (the so-called strong scaling behavior), see e.g., \cite{falgout2017multigrid}. Parallel in time methods provide an additional way to effectively utilize modern high performance computer architectures \cite{emmett2014efficient, falgout2014parallel, gander201550, gotschel2019efficient, howse2019parallel, maday2002parareal, ong2020applications}. Unfortunately parallel-in-time methods also introduce significant additional sources of error which must be estimated for the purposes of validation when seeking to construct adaptive algorithms, or when performing uncertainty quantification.

We consider a PDE system of the form: Find $\soln(x,t) \in L^2(0,T; H^1_0(\Omega))$  such that
\begin{equation}
\label{eq:pde}
\begin{aligned}
	(\dsoln, v) + a(\soln, v) &= l(v),  \quad  \forall v \in H^1_0(\Omega) \text{ and } t \in (0,T], \\
	   \soln(x,0) &= u_0(x),
\end{aligned}
\end{equation}
where $\dsoln = \frac{\partial \soln}{\partial t}$, $a(\cdot, \cdot)$ is a coercive, positive definite bilinear form, $l(\cdot)$ is a linear form,  $\Omega$ is a convex polygonal domain, and $u_0 \in H^1_0$ is the initial condition. Here $( \cdot, \cdot)$ denotes the $L_2(\Omega)$ inner product, so that $(w,v) = \int_{\Omega} w v \, {\rm d}x$.
 We assume that the forms $a$ and $l$ satisfy sufficient regularity assumptions so that a  weak solution $\soln(x,t) \in L^2(0,T; H^1_0(\Omega))$ exists and is unique \cite{lawrence2010evanspartial}. Further, we consider a quantity of interest (QoI) given by
\begin{equation}
\label{eq:qoi_def}
\mathcal{Q}(\soln) = \int_{\Omega} \psi(x) \soln(x,T) \, {\rm d}x
\end{equation}
where $\psi \in L^2(\Omega)$.

The aim of this work is to derive error estimates for the numerically computed value of $\mathcal{Q}$ when time-parallel and spatial domain decomposition techniques are employed for the approximation of $\soln(x,t)$. In particular, we consider two algorithms which we call the Time Parallel Algorithm (TPA) and the Space-Time Parallel Algorithm (STPA). The TPA implements the Parareal algorithm\cite{Emmett2012, GV07, MH08, Maday08, Maday2002387},  a time-parallel method to solve \eqref{eq:pde}, while the STPA also employs a parallel domain decomposition strategy in space \cite{Keyes:1995:DBP, Tarek2008, Smith:1996:DPM, Toselli:2004:DDM} in addition to the time-parallelism of the Parareal method.

The analysis presented here extends earlier work on the \emph{a posteriori} error analysis of the Parareal algorithm in \cite{chaudhry2016posteriori}, and incorporates the work of \cite{chaudhry2019posteriori} on the \emph{a posteriori} error analysis of overlapping Schwarz domain decomposition algorithms to provide an estimate of the spatial discretization error. One significant difference between the earlier work 
%addressing the time integration of ODEs using the Parareal method 
and the current work 
%addressing PDEs, 
is that we analyze the effects of using an iterative method to solve the discrete equations arising from an implicit time integration scheme.
%it is no longer assumed on the fine time scale, that an implicit time integration method can be solved exactly, but rather, time integration on the fine-scale may require a further iterative solution strategy in space. 
The analysis builds on early work of adjoint-based \emph{a posteriori} error analysis \cite{AO2000,   beckerrannacher, eriksson1995introduction, Giles:Suli:02} and more recent work addressing iterative and multi-rate methods \cite{CEGT13a, iternon, EGT2012}.

%Our analysis begins with a reprise of the \emph{a posteriori} error analysis for the Parareal algorithm applied to solving ode systems on $(0,T]$ as described \cite{chaudhry2016posteriori}. We extend this analysis by considering an additive domain decomposition (Jacobi) iteration at every fine time step, see \cite{chaudhry2019posteriori}, i.e., an iteration on $(t_{n-1}^p, t_n^p)$ for all $n, p$, see figure \ref{fig:subdomains_and_discretizations}.

\subsection{Notation}
Our analysis requires consideration of both exact and discrete solutions. Analytical solutions are indicated using a lower case letter, discrete solutions with an upper case letter. Solutions of the PDE will be designated $u$ and adjoint solutions $\phi$. A hat indicates a coarse scale entity. Errors are indicated using $e$ and residuals with $\mathcal{R}$. A dot indicates a partial differential in time. We use $N$ to indicate the number of finite elements, $Q$ to indicate the degree of interpolation, $P$ to indicate the number of temporal or spatial subdomains, and $K$ to indicate the number of iterations. A subscript $t$ for these symbols indicates a temporal quantity, a subscript $s$ indicates a spatial quantity.  A full list of variables and discretization choices appears in \ref{sec:appendix}.

%We let $( \cdot, \cdot)$ denote the $L_2(\Omega)$ inner product, so that $(w,v) = \int_{\Omega} w v \, dx$. 
For a function $w(x,t)$, we denote  $w(t):= w(\cdot,t)$, i.e., $w(t)$ refers to a function of space only for a fixed time $t$. We use the notation $(w,v)(t) := (w(t),v(t)) =  \int_\Omega w(x,t) v(x,t) \, {\rm d}x$, so for example the QoI~\eqref{eq:qoi_def} may be expressed as $\mathcal{Q}(\soln) = (\psi, \soln)(T)$. Finally, we let $\langle \cdot \rangle_I$ indicate integration over time interval $I$, i.e. $\langle \cdot \rangle_I = \int_I \, \cdot \, {\rm d}t$.
%See \ref{app:time_discretization} for notation.

\subsection{Outline}
We begin the analysis from the perspective of Parareal integration in time, presented in \cite{chaudhry2016posteriori}. The Parareal approach utilizes both coarse and fine scale discretizations in time. Extending this analysis to PDEs requires corresponding coarse and fine discretizations in space. Since we implement a domain decomposition strategy in space, we  employ iterative solution methods in both time and space.

% We review the Parareal algorithms in \S \ref{sec:Parareal_algorithms} and introduce our choices for temporal discretization in \S \ref{sec:time_discretization}. We review our previous \emph{a posteriori} error analysis for the Parareal time integration in \S \ref{sec:temporal_errors}.
% We briefly review overlapping Schwarz methods in \S \ref{sec:domain_decomposition}

We introduce two algorithms in section \ref{sec:parareal_algorithms}: a Time Parallel Algorithm (TPA), and a Space-Time Parallel Algorithm (SPTA).  The TPA employs  the time-parallel algorithm, Parareal, for  the solution of PDEs, whereas the SPTA additionally utilizes additive Schwarz domain decomposition to achieve parallelization in space.
This section also contains precise details of the spatial and temporal discretizations. 
We recall  some essential mathematical results needed for the subsequent analysis for the two algorithms in section~\ref{sec:var_mthds}. We
adjust the previous \emph{a posteriori} error analysis for Parareal time integration to analyze the TPA in in section \ref{sec:temporal_errors}. 
We extend this analysis to account for the errors arising using additive Schwarz domain decomposition in section \ref{sec:spatial_errors}. Numerical results supporting the accuracy of the \emph{a posteriori} error estimates  are provided in section \ref{sec:numerical_results}.

\section{Discretizations: The ``Time Parallel Algorithm'' and the  ``Space-Time Parallel Algorithm''}
\label{sec:parareal_algorithms}

We consider two algorithms: (1) An algorithm which is parallel in time only, referred to as Time Parallel Algorithm (TPA), and (2) An algorithm which is parallel in both space and time, referred to as Space-Time Parallel  Algorithm (STPA). Both algorithms use the Parareal algorithm for time-parallelism. The spatial parallelization in STPA is achieved through the use of overlapping Schwarz domain decomposition method. The discretization also involves using the finite element method in space and the implicit Euler method in time. The implicit Euler method is appropriate for dissipative problems modeled by \eqref{eq:pde}. We describe these algorithms in detail below.

\subsection{Time Parallel Algorithm (TPA)}
\label{sec:TPA}
The Parareal algorithm is based on a pair of coarse and fine scale solvers, $\csolver{\alpha(x)}{t}$ and $\fsolver{\alpha(x)}{t}$ respectively. Let $0 = T_0 < \cdots < T_{p-1} < T_p < \cdots < T_P = T$ be a partition of the time domain. We view the times $T_p$ as ``synchronization'' times at which point the coarse and fine scale solutions may exchange information. We call the time interval $[T_{p-1},T_p]$ the temporal subdomain $p$, see Figure \ref{fig:subdomains_and_discretizations}. Note that the temporal subdomains are non-overlapping.

The numerical solver $\fsolver{\alpha(x)}{t}$  is more accurate than $\csolver{\alpha(x)}{t}$, e.g., employing a finer discretization or a higher order method. The super-script $p$ indicates that the solvers produce a space-time solution for the temporal subdomain $p$. That is, for $t \in (T_{p-1},T_p]$, $\csolver{\alpha(x)}{t}$ (resp. $\fsolver{\alpha(x)}{t}$) indicates the space-time coarse scale (resp. fine scale) solution at $t$, where $\alpha(x)$ denotes the initial conditions for the solver at $t = T_{p-1}$. 
Specific examples of the solvers are presented in \ref{sec:time_space_discretization}.
%\S \ref{sec:cs_soln} and \S \ref{sec:fs_soln}.

% Now, let $\csolnpk{p}{k_t}(x,t)$ and $\fsolnpk{p}{k_t}(x,t)$ denote the coarse and fine scale Parareal solutions, where $k_t$ indicates the Parareal iteration number. These solutions are

% \subsection{Parareal variational algorithm}

The standard version of the Parareal algorithm defines the solutions only at the times $T_p$ and is not amenable to adjoint-based \emph{a posteriori} analysis, which requires solutions to be in variational form. An equivalent, variational version of the Parareal algorithm, suitable for an adjoint-based \emph{a posteriori} error analysis, is provided in Algorithm \ref{alg:parareal_variational}. The standard Parareal algorithm is provided in \ref{sec:appendix_Parareal} and a proof of its equivalence with the variational version is given in \cite{chaudhry2016posteriori}. Here $\csolnpk{p}{k_t}(x,t) $ and $\fsolnpk{p}{k_t}(x,t)$ are the coarse and fine scale solutions for the variational Parareal algorithm at iteration $k_t$ on temporal subdomain $p$. The fine-scale Parareal solution after $k_t$ iterations is defined to be $\fsolnk{k_t} = \fsolnpk{p}{k_t}$ for $t \in (T_{p-1}, T_p]$. The initial condition approximation to $\soln_0$ is referred to as $\csoln_0$. Note that since $\csoln_0$ is represented in a finite dimensional space,  it is possible that $\soln_0 \neq \csoln_0$.
 % Here $\Vspacetimecoarse{\Qctime}{\Qcspace}$ and $\Vspacetimefine{\Qftime}{\Qfspace}$ are space-time finite element solutions spaces, whose precise definitions are given in \S \ref{sec:cs_soln} and \S \ref{sec:fs_soln}.

\begin{algorithm}[H]\caption{Variational form of the Parareal algorithm}
\label{alg:parareal_variational}
\begin{algorithmic}
\Procedure{VPAR}{$\NParSub, \NParIte, \csoln_0$ }
\State $\corr{p}{0} = 0, p=0,\ldots,\NParSub$ \Comment{Initialize corrections}
\For{$k_t = 1, \ldots, \NParIte$}
  \State $\csolnpk{0}{k_t}(x,0):= \csoln_0$ \Comment{Set initial conditions}
  \For{$p=1, \ldots, \NParSub$}
    \State $\csolnpk{p}{k_t}(x,t) = \csolver{\csolnpk{p-1}{k_t}(x,T_{p-1})  + \corr{p}{k_t-1}(x)}{t}$
           \Comment{Serial computation}
    \State $\fsolnpk{p}{k_t}(x,t) = \fsolver{\csolnpk{p}{k_t}(x,T_{p-1})}{t}$
           \Comment{Parallel computation}
    \State $\corr{p}{k_t}(x) = \fsolnpk{p}{k_t}(x,T_p) - \csolver{\csolnpk{p}{k_t}(x,T_{p-1})}{T_p}$
           \Comment{Update corrections}
  \EndFor
\EndFor
\State \Return $\fsolnpk{p}{K_t}(x,t)$
\EndProcedure
\end{algorithmic}
\end{algorithm}

% \todo{SJT: Ensure use of coarse and fine terminology, not stage 1 and stage 2}

\subsubsection{Temporal and spatial discretizations of the fine scale and coarse scale solvers}
\label{sec:time_space_discretization}

We consider the implicit Euler method for time discretization and finite elements for the spatial discretization to define coarse and fine scale solutions on $[T_{p-1}, T_p]$. We omit the superscript $p$ in this section when the temporal subdomain is clear.

We discretize the \textbf{spatial} domain $\Omega$ into a quasi-uniform triangulation $\mathcal{T}_{h}$, where $h$ denotes the maximum diameter of the elements. This triangulation is chosen so the union of the elements of $\mathcal{T}_{h}$ is $\Omega$ and the intersection of any two elements is either a common edge, node, or is empty. Let $\Vspace{q}$ be the standard space of continuous piecewise polynomials of degree $q$ on $\mathcal{T}_{h}$, such that if $U \in \Vspace{q}$ then $U = 0$ on $\partial \Omega$. In particular, we consider a coarse space $\Vspace{\Qcspace}$ and fine space $\Vspace{\Qfspace}$ such that $\Qcspace < \Qfspace$.
%For simplicity we assume that $\Vspace{\Qcspace} \subset \Vspace{\Qfspace}$

On each \textbf{time} subdomain $p$, the coarse temporal discretization, uses $\Nctimep{p}$ time steps while the fine temporal discretization uses $\Nftimep{p}$ time steps. Thus, the total coarse time steps over $[0,T]$ are $\Nctime = \sum_{p=1}^{\NParSub} \Nctimep{p}$, and the total fine time steps are $\Nftime = \sum_{p=1}^{\NParSub} \Nftimep{p}$. On the coarse scale, we partition  $[T_{p-1}, T_p]$ as $T_{p-1} = \ctimepj{p}{0} < \cdots < \ctimepj{p}{\Nctimep{p-1}} < \ctimepj{p}{\Nctimep{p}} = T_p$, as shown in Figure~\ref{fig:subdomains_and_discretizations}, with $\Intcpj{p}{\hat n} = [\ctimepj{p}{\hat n-1}, \ctimepj{p}{\hat n}]$. Let $\Taucp{p} = \lbrace \Intcpj{p}{1}, \cdots, \Intcpj{p}{\hat n}, \cdots, \Intcpj{p}{\Nctimep{p}} \rbrace$. On the fine scale we introduce a finer discretization $T_{p-1} = \ftimepj{p}{0} < \cdots < \ftimepj{p}{\Nftimep{p-1}} < \ftimepj{p}{\Nftimep{p}} = T_p$ as shown in Figure~\ref{fig:subdomains_and_discretizations}. We let  $\Intfpj{p}{n} = [ \ftimepj{p}{n-1}, \ftimepj{p}{n} ]$ and  $\Taufp{p} = \lbrace \Intfpj{p}{1}, \cdots, \Intfpj{p}{n}, \cdots, \Intfpj{p}{\Nftimep{p}} \rbrace$.

\begin{figure}
\centering
\includegraphics[width=.77\textwidth]{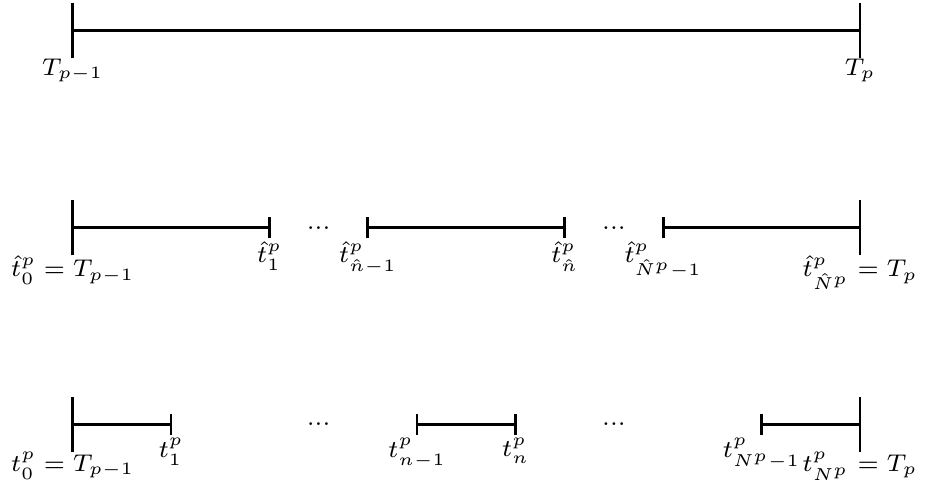}
%${disc_a2}
\caption{Top: Subdomain $[T_{p-1},T_p]$. Middle: Stage 1 discretization for $[T_{p-1},T_p]$.  Bottom: Stage 2 discretization for $[T_{p-1},T_p]$.}\label{fig:subdomains_and_discretizations}
\end{figure}

We now use the implicit Euler method on the coarse time step $\Intcpj{p}{\hat n}$: given the approximate solution $\csolnj{n-1}$ at $\ctimepj{p}{\hat n-1}$, compute approximation $\csolnj{n} \in \Vspace{\Qcspace}$ at $\ctimepj{p}{\hat n}$by,
\begin{equation}
  \label{eq:be_step_coarse}
  (\csolnj{n},v ) = (\csolnj{n-1},v) + \cdTpj{p}{n}\left[-a(\csolnj{n},v) + l(v) \right],
\end{equation}
for all $v \in  \Vspace{\Qcspace}$,where  $\cdTpj{p}{n} = (\ctimepj{p}{n}-\ctimepj{p}{n-1})$. Moreover, in the context of Parareal algorithm, we have $\csolnj{n-1} \in \Vspace{\Qcspace}$ for all ${\hat n}$ except possibly for ${\hat n} = 1$, where $\csolnj{0}$ (the initial condition for the coarse scale solver) may belong to $\Vspace{q}$.

Similarly, the implicit Euler method on the fine  time step $\Intfpj{p}{n}$ is: given the approximate solution $\fsolnj{n-1}$ at $\ftimepj{p}{n-1}$, compute approximation $\fsolnj{n} \in \Vspace{q}$ at $\ftimepj{p}{n}$ by,
\begin{equation}
  \label{eq:be_step_fine}
  (\fsolnj{n},v ) = (\fsolnj{n-1},v) + \fdTpj{p}{n}\left[-a(\fsolnj{n},v) + l(v) \right],
\end{equation}
for all $v \in  \Vspace{q}$, where  $\fdTpj{p}{n} = (\ftimepj{p}{n}-\ftimepj{p}{n-1})$. In the context of Parareal algorithm, we have $\fsolnj{n-1} \in \Vspace{q}$ for all $n$ except for $n=1$ for which  $\fsolnj{0}$ (the initial condition for the fine scale solver) belongs to $\Vspace{\Qcspace}$.

In the TPA, we assume that equations \ref{eq:be_step_coarse} and \ref{eq:be_step_fine} are solved exactly (up to numerical precision).
% or implicitly, and hence the use of the word implicit in its name.

\subsection{Space-Time Parallel Algorithm (STPA)}
\label{sec:STPA}

The STPA introduces spatial parallelization (in addition to temporal parallelization) by using a Schwarz domain decomposition iteration at the fine scale. It is assumed in the spirit of the Parareal algorithm  that the coarse scale is solved implicitly without needing any spatial domain decomposition.

\subsubsection{Schwarz Domain Decomposition at a single time step}

  The additive Schwarz Domain decomposition iteration is employed to solve the  spatial problem in \eqref{eq:be_step_fine}. 
  Similar to \S \ref{sec:time_space_discretization}, we omit the subscript $p$ where this is clear.
  We rewrite the problem  \eqref{eq:be_step_fine}  on subinterval $\Intfpj{p}{n}$ as: find $\fsolnj{n}\in \Vspace{\Qfspace}$ such that,

\begin{equation}
  \label{eq:be_step_bilin form}
  B^n(\fsolnj{n},v) = \lspace{n}(v)
\end{equation}
for all $v \in \Vspace{q}$, where the bilinear form $B^n(\cdot, \cdot)$ and the linear form $\lspace{n}(\cdot)$ are defined by,
\begin{equation}
\label{eq:bilin_lin_forms_dd}
\begin{aligned}
B^n(u,v) &= (u,v) + \fdTpj{p}{n} a(u,v),\\
\lspace{n}(v) &= (\fsolnj{n-1},v) + \fdTpj{p}{n} l(v).
\end{aligned}
\end{equation}

Assume that we have $\NDDSub$ overlapping subdomains $\Omega_1, \cdots, \Omega_{\NDDSub}$ on  $\Omega$, such that for any  $\Omega_i$, there exists a $\Omega_j$, $i \neq j$, for which $\Omega_i \cap \Omega_j \neq \emptyset$ and $\cup_i \Omega_i = \Omega$.
Let $\mathcal{T}_{h,i} \equiv \mathcal{T}_{h} \cap \Omega_i$. We further assume that the triangulation $\mathcal{T}_{h}$ is consistent with the domain decomposition  in the sense that for any $T_s \in \mathcal{T}_{h}$,  if $T_s \cap \Omega_j \neq \emptyset$ then $T_s \subset \Omega_j$.
We let $(\cdot, \cdot)_{ij}$ represent the $L_2(\Omega_i \cap \Omega_j)$ inner product.
We denote by  $B^n_i(\cdot, \cdot)$ the  restriction  of $B^n(\cdot, \cdot)$  to  $\Omega_i$ and $B^n_{ij} (\cdot, \cdot)$  the restriction of $B^n(\cdot, \cdot)$ to $\Omega_i \cap \Omega_j$. Similarly, we let $\lspace{n}_i(\cdot)$ be the restriction of $\lspace{n}(\cdot)$ to $\Omega_i$.
The additive Schwarz method for the solution of \eqref{eq:be_step_bilin form}  is given in Algorithm \ref{alg:additive_basic}. 
%We define $\fsolnjk{0}{\NDDIte}$ be $\alpha(x)$.
The domain decomposition solution for $n =1, \ldots, \Nftimep{p}$ are  $\fsolnjk{n}{\NDDIte} =$ ASDD($B^n, \lspace{n}, \NDDIte, \NDDSub,\fsolnjk{n-1}{\NDDIte} $). Note that the initial guess to the algorithm is the solution from the previous time step $\fsolnjk{n-1}{\NDDIte}$.
%TODO: Verify this in code.
In the algorithm $\Vspacedd{\Qfspace}{i}$ represents the space of continuous piecewise polynomials of degree $\Qfspace$ on $\mathcal{T}_{h,i}$ such that if $U \in \Vspacedd{\Qfspace}{i}$ then $U = 0$ on $\partial \Omega_i$, and  $\Vspaceddbc{\Qfspace}{i}{k_s}$ represents the space of continuous piecewise polynomials of degree $\Qfspace$ on $\mathcal{T}_{h,i}$ such that if $U \in \Vspaceddbc{\Qfspace}{i}{k_s}$ then $U = 0$ on $\partial \Omega$ and $U = \fsolnjk{n}{k_s}$ on $\partial \Omega_i \setminus \partial \Omega$.
% with $H^{1}_{D_{k}}(\Omega_i) \equiv \{v \in H^{1}_D(\Omega_i) |\ v = u^{\{k\}} \text{ on } \partial \Omega_i\}$.
The  Richardson parameter $\tau$, is needed to ensure that the iteration converges \cite{Tarek2008}.

\begin{algorithm}[H]\caption{Overlapping additive Schwarz domain decomposition}
\label{alg:additive_basic}
\begin{algorithmic}
\Procedure{ASDD}{$B^n, \lspace{n}, \NDDIte, \NDDSub,\fsolnjk{n}{0}$}
\For{$k=0, 1, 2, \dots, \NDDIte-1$ }
\For{$i= 1, 2, \dots, \NDDSub$ }
% \State \\
\State Find $ \fsolnikdd{i}{k_s+1} \in \Vspaceddbc{\Qfspace}{i}{k_s}$ such that \\ \qquad\qquad
  \begin{equation} \label{eq:additive_basic_local}
    B^n_i \big( \fsolnikdd{i}{k_s+1},v \big)= \lspace{n}_i(v), \quad \forall v \in \Vspacedd{\Qfspace}{i}.
  \end{equation}
% \State \\
\State Let
  \begin{equation} \label{eq:additive_basic_global}
     \fsolnjk{n}{k_s+1} = (1 - \tau \NDDSub)\fsolnjk{n}{k_s}
                 + \tau \left ( \sum_{i=1}^{\NDDSub } {\Pi_i} \fsolnikdd{i}{k_s+1} \right )
     \;\hbox{where}\quad
     \Pi_i  \fsolnikdd{i}{k_s+1} = \left \{
    \begin{gathered} \begin{aligned}
      &\fsolnikdd{i}{k_s+1}, \; &&\hbox{on } \overline{\Omega}_i, \\
      &\fsolnjk{n}{k_s},                   &&\hbox{on } \Omega \backslash \overline{\Omega}_i.
    \end{aligned} \end{gathered}  \right .
  \end{equation}
%
% \State \\
\EndFor
\EndFor

\State \Return $\fsolnjk{n}{\NDDIte}$
\EndProcedure
\end{algorithmic}
\end{algorithm}

\subsubsection{The global STPA solution}

In the context of the Parareal algorithm (Algorithm \ref{alg:parareal_variational}), we denote the solution by $\fsolnpkl{p}{k_t}{\NDDIte}$, where we have now added $\NDDIte$ to the superscript to indicate the dependence of the solution on domain decomposition iterations at the $k_t$th Parareal iteration. The global STPA solution on $[0,T]$ after  $k_t$ Parareal iterations and $\NDDIte$ domain decomposition iterations is defined as $\fsolnkl{k_t}{\NDDIte} = \fsolnpkl{p}{k_t}{\NDDIte}$ for $t \in (T_{p-1}, T_p]$.

\section{Variational methods and preliminary \emph{a posteriori} error analysis}
\label{sec:var_mthds}

In this section, we introduce  basic elements needed for the error analysis of the numerical value of the QoI (see \eqref{eq:qoi_def}) computed using the fine scale solution from either the the Time Parallel Algorithm or the Space-Time Parallel Algorithm.
%, i.e. $\mathcal{Q}(\soln - \fsolnk{\NParIte}) = ( \psi, \soln(T) -\fsolnk{\NParIte}(T))$. 
The main tools used in the analysis are adjoint problems coupled with an appropriate definition of residuals. This kind of analysis relies on the solution being in a variational format. Hence, we present the implicit Euler method as a variational method in \S \ref{sec:be_as_dg0}, and point out a fundamental property of variational methods which is used subsequently in the analysis of TPA and STPA. 
%The \emph{a posterioi} analysis of TPA is presented in \ref{sec:temporal_errors}, while the \emph{a posteriori} analysis of STPA is presented in \ref{sec:spatial_errors}.

% section{Variational methods and residuals}

%%%%%%%%%%%%%%%%%%%%%%%%%%%%%%%%%%%%%%%%%%%%%%%%%%%%%%%%%%%%%%%%%%%%%%%%%%%%%%%%%%%%%%%%%%%%%%%%%%%%%%%%%
\subsection{Variational methods}
We define two variational methods; the continuous Galerkin method and the discontinuous Galerkin method for the fine scale; the extension to the coarse scale is analogous. Note that the usage of ``continuous'' or ``discontinuous'' refers to the property of the solution in time only; in space we always use continuous finite elements.

On the space-time slab $S^p_{n} = \Omega \times \Intfpj{p}{n}$ we define the space
\begin{equation}
   \label{Vspacetimef}
   \Vspacetime{n}{\Qftime}{\Qfspace} := \{ w(x,t):  w(x,t) = \sum_{j=0}^{\Qctime} t^j v_j(x), v_j \in \Vspace{\Qfspace}, (x,t) \in S^p_{n}  \},
\end{equation}
% and define a space on $\Omega \times [T_{p-1}, T_p]$ as
% \begin{equation}
% \label{Vspacetimefine}
%    \Vspacetimefine{\Qftime}{\Qfspace} = \{w(x,t): w(x,t)|_{\Intfpj{p}{n}} \in \Vspacetime{n}{\Qftime}{\Qfspace}, w(\cdot, t) \text{ is continuous on } [T_{p-1}, T_p] \}.
% \end{equation}

The \textbf{continuous Galerkin method,  cG($\Qftime$)} is: find $\fsolnCG$ such that it is continuous on $S^p_{n}$, its restriction to $S^p_{n}$ belongs to $\Vspacetime{n}{\Qftime}{\Qfspace}$  and also satisfies,
\begin{equation}
\label{eq:fine_solv_one_dt_cg}
\finv{ (\dfsolnCG,v)}{p}{n} = \finv{-a(\fsolnCG,v)+ l(v)}{p}{n}
\end{equation}
for all $v \in \Vspacetime{n}{\Qftime-1}{\Qfspace}$.

We denote the jump across $\ftimepj{p}{n}$ as $[w]_{n,p} = w_{n,p}^+ - w_{n,p}^-$, where $w_{n,p}^{\pm} = \lim_{s \rightarrow (\ftimepj{p}{n})^{\pm}}w(s)$.
The \textbf{discontinuous Galerkin method, dG($\Qftime$)} is: find $\fsolnCG$  its restriction to $S^p_{n}$ belongs to $\Vspacetime{n}{\Qftime}{\Qfspace}$  and also satisfies,
\begin{equation}
\label{eq:fine_solv_one_dt_dg}
\finv{ (\dfsolnDG,v)}{p}{n} + ([\fsolnDG]_{n-1,p},v_{n-1,p}^+)  = \finv{-a(\fsolnDG,v)+ l(v)}{p}{n}
\end{equation}
for all $v \in \Vspacetime{n}{\Qftime}{\Qfspace}$. 
Note that the solution may be discontinuous in at the time nodes $\ftimepj{p}{n}$.

\subsection{ A posteriori error analysis for variational methods on a single time subdomain}
Variational methods satisfy an important property that underpins the \emph{a posteriori} error analysis presented here. Let $\adjoint \in L^2(T_{p-1}, T_p; H^1_0(\Omega))$ satisfy,
\begin{equation}
\label{eq:generic_phi_p}
  (-\dot{\adjoint}, v) = -a(v, \adjoint) \quad \forall v \in H^1_0(\Omega) \text{ and } t \in (T_{p-1},T_p]
\end{equation}
Equation \eqref{eq:generic_phi_p} is referred to as an adjoint equation. 
%Now we present an error accumulation result for variational methods.
Next we quantify the accumulation of error contributions.
\begin{lem}

 Let $\ferrVAR = \soln - \fsolnVAR$, where  $\soln$ is the solution to \eqref{eq:pde}, $\mathrm{Var} \in \{ CG, DG \}$ and  $\fsolnVAR$ is either the  cG($\Qftime$) or the dG($\Qftime$) solution on a time subdomain $p$. 
Then,
\begin{equation}
\label{eq:var_prop}
(\adjoint, \ferrVAR)(T_p) = (\adjoint, \ferrVAR)(T_{p-1}) + \VARrespuphi{p}{\fsolnVAR}{\adjoint}
\end{equation}
where $\adjoint$ is defined in \eqref{eq:generic_phi_p} and $\VARrespuphi{p}{\fsolnVAR}{\adjoint}$ is the adjoint-weighted space-time residual of the  solution $\fsolnVAR$ on time subdomain $p$ defined as
\begin{equation}
\label{eq:res_cg_dg}
  \VARrespuphi{p}{\fsolnVAR}{\adjoint} = \begin{cases}
  \sum_{n=1}^{{N}_p} \left[ \finv{l(v) -  a(\fsolnCG, v}{p}{n}
          -\finv{( \dfsolnCG, v)}{p}{n} \right], \qquad &\text{ if } \mathrm{Var} = CG,\\
  \sum_{n=1}^{{N}_p} \left[ \finv{l(v) -  a(\fsolnDG, v}{p}{n}
          -\finv{( \dfsolnDG, v)}{p}{n} - ([\fsolnDG]_{n-1,p},\adjoint_{n-1,p}^+) \right], \; &\text{ if } \mathrm{Var} = DG.
  \end{cases}
\end{equation}

\end{lem}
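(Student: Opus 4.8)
The plan is to obtain \eqref{eq:var_prop} from the fundamental theorem of calculus applied in time to the scalar function $g(t) := (\adjoint(t), \ferrVAR(t))$ on $[T_{p-1},T_p]$, after rewriting $\dot g$ so that the unknown exact solution $\soln$ drops out. First I would record a pointwise‑in‑time identity on any open subinterval where $\fsolnVAR$ is smooth (each $\Intfpj{p}{n}$ for cG; the interior of each $\Intfpj{p}{n}$ for dG). Because $\soln(t)$ and $\fsolnVAR(t)$ both lie in $H^1_0(\Omega)$, so does $\ferrVAR(t)$, and hence $\dot g = (\dot\adjoint, \ferrVAR) + (\adjoint, \dot\ferrVAR)$. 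Testing the adjoint equation \eqref{eq:generic_phi_p} with $v = \ferrVAR(t)$ gives $(\dot\adjoint, \ferrVAR) = a(\ferrVAR, \adjoint)$, and testing the state equation \eqref{eq:pde} with $v = \adjoint(t)$ gives $(\adjoint, \dsoln) = l(\adjoint) - a(\soln, \adjoint)$. Substituting $\dot\ferrVAR = \dsoln - \dot U_{\Var}$ and using linearity of $a$ in its first argument, $a(\ferrVAR,\adjoint) = a(\soln,\adjoint) - a(\fsolnVAR,\adjoint)$ — which is exactly the cancellation the adjoint structure $a(v,\adjoint)$ is designed to produce — yields
\begin{equation}
\label{eq:proof_pw_identity}
\dot g = l(\adjoint) - a(\fsolnVAR, \adjoint) - (\dot U_{\Var}, \adjoint),
\end{equation}
which is precisely the integrand appearing inside $\langle\,\cdot\,\rangle_{\Intfpj{p}{n}}$ in the residual \eqref{eq:res_cg_dg}, with the generic test slot occupied by $\adjoint$. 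Invoking the regularity assumptions on $\soln$ and $\adjoint$ is what justifies using \eqref{eq:pde} and \eqref{eq:generic_phi_p} pointwise in $t$.

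For the cG case, $\fsolnCG$ and hence $g$ are continuous on all of $[T_{p-1},T_p]$, so the fundamental theorem of calculus gives $g(T_p) - g(T_{p-1}) = \sum_{n=1}^{N_p} \langle \dot g \rangle_{\Intfpj{p}{n}}$; substituting \eqref{eq:proof_pw_identity} reproduces $\VARrespuphi{p}{\fsolnCG}{\adjoint}$ and hence \eqref{eq:var_prop}. For the dG case $g$ is only piecewise smooth, with possible jumps at the fine nodes $\ftimepj{p}{n}$; since $\adjoint$ (the solution of a backward parabolic problem) and $\soln$ are both continuous in time, those jumps come only from $\fsolnDG$. Applying the fundamental theorem of calculus on each $\Intfpj{p}{n}$, telescoping, and taking the endpoint values $(\adjoint,\ferrVAR)(T_{p-1})$, $(\adjoint,\ferrVAR)(T_p)$ to be the one‑sided limits that $\fsolnDG$ assumes as the subdomain datum enters at $\ftimepj{p}{0}$ and leaves at $\ftimepj{p}{N_p}$ (so that the per‑subdomain identities telescope correctly when summed over $p$) yields
\begin{equation}
\label{eq:proof_telescope}
g(T_p) - g(T_{p-1}) = \sum_{n=1}^{N_p} \langle \dot g \rangle_{\Intfpj{p}{n}} + \sum_{n=1}^{N_p} \big[ (\adjoint, \ferrVAR) \big]_{n-1,p},
\end{equation}
where $\big[\,\cdot\,\big]_{n-1,p}$ denotes the jump at $\ftimepj{p}{n-1}$. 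Continuity of $\adjoint$ and $\soln$ in time gives $\big[(\adjoint,\ferrVAR)\big]_{n-1,p} = (\adjoint_{n-1,p}^+, [\ferrVAR]_{n-1,p}) = -([\fsolnDG]_{n-1,p}, \adjoint_{n-1,p}^+)$, and combining this with \eqref{eq:proof_pw_identity} in \eqref{eq:proof_telescope} produces exactly the dG branch of \eqref{eq:res_cg_dg}.

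I expect the main obstacle to be bookkeeping rather than analysis: aligning the index ranges, the orientation of the jumps, and the choice of one‑sided limits at $\ftimepj{p}{0} = T_{p-1}$ and $\ftimepj{p}{N_p} = T_p$ in the telescoping sum so that they match the definition of $\VARrespuphi{p}{\fsolnDG}{\adjoint}$ term by term — in particular on $\Intfpj{p}{1}$, where the datum $\fsolnDG$ receives at $\ftimepj{p}{0}$ lies in the coarse spatial space $\Vspace{\Qcspace}$ rather than the fine space $\Vspace{\Qfspace}$ (as noted after \eqref{eq:be_step_fine}), so that $[\fsolnDG]_{0,p}$ is genuinely nonzero and must be carried. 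Once these conventions are fixed consistently with Algorithm \ref{alg:parareal_variational}, the remainder is the elementary computation \eqref{eq:proof_pw_identity} together with the fundamental theorem of calculus.
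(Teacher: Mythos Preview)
Your proposal is correct and follows essentially the same route as the paper: the paper tests the adjoint equation \eqref{eq:generic_phi_p} with $v=\ferrDG$, integrates by parts in time on each $\Intfpj{p}{n}$, substitutes the state equation \eqref{eq:pde}, and telescopes over $n$ --- which is exactly your product-rule computation of $\dot g$ followed by the fundamental theorem of calculus, just written from the other side of the integration-by-parts identity. Your treatment of the jump terms and endpoint conventions is, if anything, more careful than the paper's (which contains a minor index typo in \eqref{eq:err_sing_int}).
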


\begin{proof}
We prove property \eqref{eq:var_prop} for the dG method, the proof for the cG method is similar. In the notation of the dG method, we have 
\begin{equation}
\label{eq:not_para_dg}
\ferrVAR(T_{p-1}) = \ferrVAR^+(\ftimepj{p}{0}) \qquad \text{and} \qquad \ferrVAR(T_{p}) = \ferrVAR^-(\ftimepj{p}{{N}_p}).  
\end{equation}
Consider
\begin{equation}
\label{eq:lem_err_acc_1}
\finv{ (-\dot{\adjoint},\ferrDG)}{p}{n}  = -\finv{a(\ferrDG,\adjoint)}{p}{n}.
\end{equation}
The left-hand side is
\begin{equation}
\label{eq:lem_err_acc_2}
\finv{ (-\dot{\adjoint},\ferrDG)}{p}{n}   = - (\adjoint, \ferrDG^-)(\ftimepj{p}{n})  + (\adjoint, \ferrDG^+)(\ftimepj{p}{n-1}) + \finv{(\adjoint, \dsoln - \dfsolnDG)}{p}{n}.
\end{equation}
Combining \eqref{eq:lem_err_acc_1} and \eqref{eq:lem_err_acc_2} with \eqref{eq:pde}  leads to
\begin{equation}
\label{eq:err_sing_int}
\begin{aligned}
(\fadj, \ferrDG^-)(\ftimepj{p}{n})  &= (\adjoint, \ferrDG^+)(\ftimepj{p}{n-1})
+\finv{l(v) -  a(\fsolnDGp{p}, v}{p}{n}
          -\finv{( \dfsolnDG, v)}{p}{n},\\
          &= (\adjoint, \ferrDG^-)(\ftimepj{p}{n-1})
          -(\adjoint_{n-1,p}^+, [\fsolnDG]_{n,p})
+\finv{l(v) -  a(\fsolnDGp{p}, v}{p}{n}
          -\finv{( \dfsolnDG, v)}{p}{n}
\end{aligned}
\end{equation}
Summing \eqref{eq:err_sing_int} over $n = 1$ to $n = \Nftimep{p}$ and using \eqref{eq:not_para_dg},
\begin{equation}
\begin{aligned}
(\adjoint, \ferrDG)(T_p) &= (\adjoint, \ferrDG)(T_{p-1}) + \frespuphi{p}{\fsolnDG}{\adjoint},
\end{aligned}
\end{equation}
which is the desired result. 
\end{proof}
Note that a similar result, with similar derivation, also holds for the coarse scale.

\subsection{\emph{A posteriori} error analysis for implicit Euler on a single time subdomain}
\label{sec:be_as_dg0}

It is well known that the implicit Euler method with a certain choice of quadrature is nodally equivalent to the $dG(0)$ method and hence may be considered a variational method~\cite{eehj_book_96}. For completeness, we show this equivalence for the $dG(0)$ in \eqref{eq:fine_solv_one_dt_dg} with the implicit Euler method in \eqref{eq:be_step_fine}. For the $dG(0)$ method, we have $\dfsolnDG = 0$, since the solution is constant in time over $\Intfpj{p}{n}$ for a fixed spatial point. Moreover, $\Vspacetime{n}{0}{\Qfspace}$ is simply $\Vspace{\Qfspace}$. 
 Now, using these facts, along with the right-hand rule of evaluating quadrature in \eqref{eq:fine_solv_one_dt_dg},
\begin{equation}
  ( (\fsolnDG)_{n,p}^- - (\fsolnDG)_{n,p}^+,v_{n-1,p}^+)  =  \fdTpj{p}{n} \left[ a((\fsolnDG)_{n,p}^-,v_{n,p}^-) + l(v) \right]
\end{equation}
Now, since $v \in \Vspacetime{n}{0}{\Qfspace}$ is constant in time, we have  $v_{n-1,p}^+ = v_{n,p}^- = v$. Finally, identifying $(\fsolnDG)_{n,p}^-$ with $\fsolnj{n}$ and $(\fsolnDG)_{n,p}^+$ with $\fsolnj{n-1}$ completes the equivalence of the two equations.

% \subsection{Residuals for the implicit Euler method}

% Now, that implicit Euler may be interpreted as the variational $dG(0)$ method, 
Now we define  coarse and fine scale analogues to  \eqref{eq:res_cg_dg} for the implicit Euler method.
The fine scale residuals are,
\begin{align}
\frespnarg{p}{n}{w}{v} &=  \finv{l(v) -  a(w, v}{p}{n}
          - ([w]_{n-1,p},v^+), \label{eq:fine_scale_residual_I} \\
\frespuphi{p}{w}{v}&= \sum_{n=1}^{{N}_p}\frespnarg{p}{n}{w}{v}. \label{eq:fine_scale_residual}
\end{align}
The coarse scale residuals are,
\begin{align}
\crespnarg{p}{\hat{n}}{w}{v} &=  \cinv{l(v) -  a(w, v}{p}{\hat{n}}
          - ([w]_{\hat{n}-1,p},v^+), \label{eq:coarse_scale_residual_I} \\
\crespuphi{p}{w}{v}&= \sum_{\hat{n}=1}^{\hat{N}_p}\crespnarg{p}{\hat{n}}{w}{v}. \label{eq:coarse_scale_residual}
\end{align}
Notice that there is no time derivative in the residuals because, as discussed earlier, those terms are 0 for the $dG(0)$ method. 
% Let $\fsoln$ be the solution as defined in \eqref{eq:fine_solv_one_dt} and let $\adjoint$ be defined by \eqref{eq:generic_phi_p}. Let $\ferrCG = \soln - \fsolnCG$.

The analogues of property \eqref{eq:var_prop} for the coarse and fine scale solutions in Algorithm~\ref{alg:parareal_variational} follow immediately.
Let $\fsolnp{p} = \fsolvernoargs[\fsolnp{p}(T_{p-1})]$ be the fine-scale solution on time domain $p$, and $\ferrp{p} = \soln - \fsolnp{p}$. Then,
\begin{equation}
\label{eq:fres}
(\adjoint, \ferrp{p})(T_p) = (\adjoint, \ferrp{p})(T_{p-1}) + \frespuphi{p}{\fsolnp{p}}{\adjoint}.
\end{equation}
Let  $\csolnp{p} = \csolvernoargs[\csolnp{p}(T_{p-1})]$  be the coarse scale solution on time domain $p$. Let $\cerrp{p} = \soln - \csolnp{p}$.
Then,
\begin{equation}
\label{eq:cres}
(\adjoint, \cerrp{p})(T_p) = (\adjoint, \cerrp{p})(T_{p-1}) + \crespuphi{p}{\csolnp{p}}{\adjoint}.
\end{equation}

%%%%%%%%%%%%%%%%%%%%%%%%%%%%%%%%%%%%%%%%%%%%%%%%%%%%%%%%%%%%%%%%%%%%%%%%%%%%%%%%%%%%%%%%%%%%%%%%%%%%%%%%%%
\section{\emph{A posteriori} error analysis for the Time Parallel  Algorithm (TPA)}
\label{sec:temporal_errors}

In this section we derive  estimates for the error in the QoI computed from the fine scale solution of the TPA presented in \S \ref{sec:TPA}. That is, we seek to estimate $\mathcal{Q}(\soln - \fsolnk{\NParIte}) = ( \psi, \soln(T) -\fsolnk{\NParIte}(T))$. 
The analysis for TPA is quite similar to the analysis for ODEs appearing in \cite{chaudhry2016posteriori}, except that the analysis for PDEs includes the error in the initial conditions, which is assumed to be zero for ODEs.  

\subsection{Coarse and fine scale adjoint problems}

The coarse scale adjoint problem is: find $\cadj \in L^2(0,T; H^1_0(\Omega))$ such that
\begin{equation}
\label{eq:coarse_adjoint}
\left .
\begin{gathered}\begin{aligned}
(-\dot{\cadj}, v) &= -a(v, \cadj) \quad \forall v \in H^1_0(\Omega) \text{ and } t \in (0,T], \\
       \cadj(T) &=\psi.
\end{aligned}\end{gathered}
\qquad \right \}
\end{equation}
Here, the term ``coarse'' indicates that in numerical experiments, this adjoint is approximated using a coarse (spatial and temporal) scale space.

For $p = 1, 2, \ldots, \NParSub$, the fine scale  adjoint problem for the $p$th time domain $[T_{p-1}, T_p]$ is: find $\fadjp{p} \in  L^2(T_{p-1}, T_p; H^1_0(\Omega))$ such that
\begin{equation}
\label{eq:fine_adjoint_equation}
\left .
\begin{gathered}\begin{aligned}
(-\dfadjp{p},v) &= -a(v,\fadjp{p})   \quad  \forall v \in H^1_0(\Omega) \text{ and } t \in (T_{p-1}, T_p], \\
 \fadjp{p}(T_p) &= \cadj (T_p).
\end{aligned}\end{gathered}
\qquad \right \}
\end{equation}
Notice that the fine scale adjoint problem on the $p$th time subdomain receives initial conditions from the coarse scale adjoint problem and  therefore the adjoint on the fine temporal scale can be solved independently on each of the $p$ time domains. However, while reducing solution cost, the resulting discontinuities in the fine temporal scale adjoint solution occurring at the boundaries of temporal subdomains give rise to  an additional contribution to the error. In numerical experiments experiments, the fine scale adjoints are approximated using a fine (relative to the space used for the approximation of the coarse scale adjoints) scale space.

We seek the error in the fine scale solution below. However, since coarse scale solutions are used as initial conditions for fine scale integration on each subdomain, the analysis indicates the expected interaction between errors at the two different scales.

%We seek the error in the fine scale solution below and note the coarse scale error may also be estimated as described in  \ref{sec:appendix_coarse_error}. Since coarse scale solutions are used as initial conditions for fine scale integration on each subdomain, we observe the expected interaction between errors at the two different scales.

\subsection{Error representations}

% \medskip
% \noindent\emph{\bf Error in fine scale solution over a single fine timestep}

% Using equation \eqref{eq:fine_adjoint_equation}  we have over one coarse sub-step $\Intfpj{p}{n} \subset [T_{p-1}, T_p]$ the error

Let $\ferrk{k_t} = \soln - \fsolnk{k_t}$ and $\cerrk{k_t} = \soln - \csolnk{k_t}$.
\begin{lem}
\begin{equation}
\label{eq:par_err_rep_without_aux_errs}
\begin{aligned}
% (\psi, \ferrp{\NParSub}(T)) &=
(\psi, \ferrk{k_t}(T)) &=
\sum_{p=1}^{\NParSub}\frespuphi{p}{\fsolnpk{p}{k_t}}{\fadjp{p}} +   (\fadjp{1}, \cerrpk{1}{k_t})(0) \\
& + \sum_{p=2}^{\NParSub}\bigg[ (\fadjp{p} - \cadj,\cerrpk{p-1}{k_t}) + (\fadjp{p} - \cadj, \csolnpk{p-1}{k_t} - \csolnpk{p}{k_t}) + (\cadj, \fsolnpk{p-1}{k_t} - \fsolnpk{p}{k_t})\bigg] (T_{p-1}).
\end{aligned}
\end{equation}
\end{lem}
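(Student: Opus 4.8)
The plan is to telescope the error contributions across the temporal subdomains, using the single-subdomain accumulation identities \eqref{eq:fres} and \eqref{eq:cres} together with the jump relations between the coarse and fine scale solutions at the synchronization times $T_p$. First I would note that the quantity of interest $(\psi, \ferrk{k_t}(T))$ equals $(\cadj(T), \ferrk{k_t}(T))$ since $\cadj(T) = \psi$, and that $\ferrk{k_t}$ restricted to $(T_{p-1},T_p]$ is $\soln - \fsolnpk{p}{k_t}$. On the final subdomain $p = \NParSub$, the fine scale adjoint satisfies $\fadjp{\NParSub}(T_{\NParSub}) = \cadj(T_{\NParSub}) = \cadj(T)$, so I would replace $(\psi, \ferrk{k_t}(T))$ by $(\fadjp{\NParSub}, \ferrpk{\NParSub}{k_t})(T)$ and apply \eqref{eq:fres} on subdomain $\NParSub$ to peel off $\frespuphi{\NParSub}{\fsolnpk{\NParSub}{k_t}}{\fadjp{\NParSub}}$ and reduce to a term evaluated at $T_{\NParSub-1}$.

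The core of the argument is the recursion step. At an interior synchronization time $T_{p-1}$, I have accumulated a term of the form $(\fadjp{p}, \ferrpk{p}{k_t})(T_{p-1})$ coming from the residual accounting on subdomain $p$; I want to convert this into the residual on subdomain $p-1$ plus a term $(\fadjp{p-1}, \ferrpk{p-1}{k_t})(T_{p-1})$ (which feeds the next recursion step) plus the explicit "mismatch" terms appearing in the statement. The key algebraic identity is that the fine-scale solution entering subdomain $p$ does not use $\fsolnpk{p-1}{k_t}(T_{p-1})$ as its initial value but rather $\csolnpk{p}{k_t}(T_{p-1})$, which in turn is produced by the coarse solver from $\csolnpk{p-1}{k_t}(T_{p-1}) + \corr{p}{k_t-1}(T_{p-1})$; and the relevant comparison to make is $\ferrpk{p}{k_t}(T_{p-1}) = (\soln - \csolnpk{p}{k_t})(T_{p-1})$ at the left endpoint (since the fine solver is seeded by the coarse solution there), which I would write as $\cerrpk{p}{k_t}(T_{p-1})$. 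Then I add and subtract: $\cerrpk{p}{k_t} = \cerrpk{p-1}{k_t} + (\csolnpk{p-1}{k_t} - \csolnpk{p}{k_t})$, and also switch the adjoint from $\fadjp{p}$ to $\cadj$ and then to $\fadjp{p-1}$ using $\fadjp{p-1}(T_{p-1}) = \cadj(T_{p-1})$ and $\cerrpk{p-1}{k_t}(T_{p-1}) = \ferrpk{p-1}{k_t}(T_{p-1}) + (\fsolnpk{p-1}{k_t} - \csolnpk{p-1}{k_t})(T_{p-1})$ — wait, more carefully, I should split $(\fadjp{p}, \cdot) = (\cadj, \cdot) + (\fadjp{p} - \cadj, \cdot)$, handle the $(\fadjp{p}-\cadj)$ part directly against $\cerrpk{p-1}{k_t}$ and the coarse-solution jump, and then rewrite the $(\cadj, \cdot)$ part by reintroducing the fine-scale quantities so that \eqref{eq:fres} on subdomain $p-1$ can be applied with adjoint $\fadjp{p-1}$. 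Carrying this out produces exactly the three bracketed terms at $T_{p-1}$ in the statement.

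Finally, the recursion bottoms out at $p = 1$: here $\fsolnpk{1}{k_t}$ is seeded by $\csolnpk{0}{k_t}(0) = \csoln_0$, so $\ferrpk{1}{k_t}(0) = (\soln_0 - \csoln_0) = \cerrpk{1}{k_t}(0)$, and the leftover term is $(\fadjp{1}, \cerrpk{1}{k_t})(0)$, matching the statement. Summing the peeled-off residuals $\frespuphi{p}{\fsolnpk{p}{k_t}}{\fadjp{p}}$ over $p = 1,\dots,\NParSub$ and collecting the mismatch terms over $p = 2,\dots,\NParSub$ gives \eqref{eq:par_err_rep_without_aux_errs}. The main obstacle I anticipate is bookkeeping: getting the signs and the exact form of the three mismatch terms right requires careful tracking of which adjoint ($\fadjp{p}$, $\cadj$, or $\fadjp{p-1}$) multiplies which error or solution-difference at each endpoint, and of the fact that the fine-scale error at the left endpoint of subdomain $p$ is measured against the coarse solution there, not the fine solution from subdomain $p-1$. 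A secondary subtlety is that $\cerrpk{p}{k_t}$ must be expressed in terms of $\cerrpk{p-1}{k_t}$ using the Parareal update relations (the coarse solver and the correction $\corr{p}{k_t-1}$), and one must check that the correction terms telescope or cancel appropriately so that only the stated differences survive.
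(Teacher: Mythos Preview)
Your proposal is correct and follows essentially the same route as the paper: the paper sums the single-subdomain identity \eqref{eq:fres} over all $p$ and then rearranges, whereas you phrase the same computation as a one-step recursion, but the ingredients---$\fadjp{p}(T_p)=\cadj(T_p)$, $\fsolnpk{p}{k_t}(T_{p-1})=\csolnpk{p}{k_t}(T_{p-1})$, and the add-and-subtract $\cerrpk{p}{k_t}=\cerrpk{p-1}{k_t}+(\csolnpk{p-1}{k_t}-\csolnpk{p}{k_t})$---are identical. Your closing worry about the corrections $\corr{p}{k_t-1}$ is unnecessary: they never enter the argument explicitly, since the relation you already wrote expressing $\cerrpk{p}{k_t}$ via $\cerrpk{p-1}{k_t}$ and the coarse-solution jump is a pure add-and-subtract and is all that is needed.
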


\begin{proof}
By definition of $\fsolnk{k_t}$, we have $\fsolnk{k_t}(T) = \fsolnpk{\NParSub}{k_t}(T) = \fsolnpk{\NParSub}{k_t}(T_{\NParSub})$. From  \eqref{eq:fres} and
$\fadjp{p}(T_p) = \cadj(T_p)$ (see \eqref{eq:fine_adjoint_equation}) we have
\begin{equation}
(\cadj, \ferrpk{p}{k_t})(T_p) = (\fadjp{p}, \ferrpk{p}{k_t})(T_{p-1}) + \frespuphi{p}{\fsolnpk{p}{k_t}}{\fadjp{p}}.
\end{equation}
Summing over all time subdomains from $p=1$ to $\NParSub$, and isolating $(\psi, \ferrpk{\NParSub}{k_t}(T)) = (\psi, \ferrk{k_t}(T))$  leads to

\begin{equation}
\begin{aligned}
(\psi, \ferrk{k_t}(T)) &= -\sum_{p=1}^{\NParSub-1} (\cadj, \ferrpk{p}{k_t})(T_p) + \sum_{p=2}^{\NParSub} (\fadjp{p}, \ferrpk{p}{k_t})(T_{p-1})   + \sum_{p=1}^{\NParSub}\frespuphi{p}{\fsolnpk{p}{k_t}}{\fadjp{p}} + (\fadjp{1}, \ferrpk{1}{k_t})(0).
\end{aligned}
\end{equation}
Using $\fsolnpk{p}{k_t}(T_{p-1}) = \csolnpk{p}{k_t}(T_{p-1})$ (see Algorithm \ref{alg:parareal_variational}),
\begin{equation}
  (\psi, \ferrk{k_t}(T)) = -\sum_{p=1}^{\NParSub-1} (\cadj, \ferrpk{p}{k_t})(T_p) + \sum_{p=2}^{\NParSub} (\fadjp{p}, \cerrpk{p}{k_t})(T_{p-1})   + \sum_{p=1}^{\NParSub}\frespuphi{p}{\fsolnpk{p}{k_t}}{\fadjp{p}}+ (\fadjp{1}, \cerrpk{1}{k_t})(0).
\end{equation}
Rearranging, adding and subtracting terms as necessary, we arrive at

\begin{equation}
\begin{aligned}
(\psi, \ferrk{k_t}(T)) &= \sum_{p=1}^{\NParSub}\frespuphi{p}{\fsolnpk{p}{k_t}}{\fadjp{p}} +   (\fadjp{1}, \cerrp{1})(0) \\
&+  \sum_{p=2}^{\NParSub}\bigg[ (\fadjp{p} - \cadj,\cerrpk{p-1}{k_t}) + (\fadjp{p} - \cadj, \csolnpk{p-1}{k_t} - \csolnpk{p}{k_t}) + (\cadj, \fsolnpk{p-1}{k_t} - \fsolnpk{p}{k_t})\bigg](T_{p-1}).
\end{aligned}
\end{equation}

\end{proof}

% \medskip
% \noindent\emph{\bf Auxiliary error and auxiliary adjoints}

The terms $(\fadjp{p} - \cadj,\cerrpk{p-1}{k_t})(T_{p-1})$ in the error representation \eqref{eq:par_err_rep_without_aux_errs} are not directly computable as they contain the unknown error $\cerrp{p-1}$. We solve auxiliary adjoint problems to account for these error terms. As noted earlier, this is the tradeoff between solving the fine-scale adjoint problem from $T$ to $0$ and solving $P$ adjoint problems on each subdomain independently using the coarse scale adjoint problem as ``initial'' conditions.  Consider $(\NParSub-1)$ auxiliary (coarse scale) QoIs as
\begin{align}
  \Psi(u) = (\fadjp{p} - \cadj, u),
\end{align}
and $(\NParSub-1)$ auxiliary adjoint problems: find $\dauxadjp{p} \in L^2(0,T_{p-1}; H^1_0(\Omega))$ such that
\begin{equation}
\label{eq:auxiliary_adjoint_equation}
\left .
\begin{gathered}\begin{aligned}
     -\dauxadjp{p} &= -a(v, \auxadjp{p} ), \qquad \quad  \forall v \in H^1_0(\Omega) \text{ and } t \in (0,T_{p-1}]\\
\auxadjp{p}(x,T_{p-1}) &= \fadjp{p}(x,T_{p-1}) - \cadj(x,T_{p-1}).
\end{aligned}\end{gathered}
\qquad \right \}
\end{equation}
% Then, similar to equation \eqref{eq:} we have,
Replacing $\adjoint$ by $\auxadjp{p}$ in \eqref{eq:cres} on $k$th time subdomain we have

\begin{equation}
  (\auxadjp{p}, \cerrpk{k}{k_t})(T_{k}) = (\auxadjp{p}, \cerrpk{k}{k_t})(T_{k-1}) + \crespuphi{k}{\csolnpk{k}{k_t}}{\auxadjp{p}}.
\end{equation}
Summing from $k=1$ to $k = p-1$ and using \eqref{eq:auxiliary_adjoint_equation},
\begin{equation}
\label{eq:aux_err}
(\fadjp{p} - \cadj,\cerrpk{p-1}{k_t})(T_{p-1})
= \sum_{k=2}^{p-1}(\auxadjp{p}, \csolnpk{k-1}{k_t} - \csolnpk{k}{k_t})(T_{k-1})
  + \sum_{k=1}^{p-1} \crespuphi{k}{\csolnpk{k}{k_t}}{\auxadjp{p}} %\auxresp{k}
   + (\auxadjp{p}, \cerrpk{1}{k_t})(0).
\end{equation}
% where
% \begin{equation}
% \auxresp{k}
%  = \sum_{i=1}^{\Nctimep{p}} \cinv{l(\auxadjp{p}) +  a(\csolnpk{p}{k_t}, \auxadjp{p})}{p}{n}
%                           - \cinv{ (\dcsolnpk{p}{k_t},\auxadjp{p}) }{p}{n}
% \end{equation}
% is the auxiliary coarse scale residual.

% \medskip
% \noindent\emph{\bf Total temporal error for fine-scale time solution (revisited)}

Combining \eqref{eq:par_err_rep_without_aux_errs} and \eqref{eq:aux_err} yields

\begin{equation}
  \begin{aligned}
  (\psi, \ferrk{k_t}(T)) &= \sum_{p=1}^{\NParSub}\frespuphi{p}{\fsolnpk{p}{k_t}}{\fadjp{p}} + (\fadjp{1}, \cerrpk{1}{k_t})(0) \\
  &+  \sum_{p=2}^{\NParSub}\bigg[
                     \sum_{k=2}^{p-1}(\auxadjp{p}, \csolnpk{k-1}{k_t} - \csolnpk{k}{k_t})(T_{k-1})
                   + \sum_{k=1}^{p-1} \crespuphi{k}{\csolnpk{k}{k_t}}{\auxadjp{p}} + (\auxadjp{p}, \cerrpk{1}{k_t})(0) \bigg] \\
  &+ \sum_{p=2}^{\NParSub}\bigg[(\fadjp{p} - \cadj, \csolnpk{p-1}{k_t} - \fsolnpk{p}{k_t})
                              + (\cadj, \fsolnpk{p-1}{k_t} - \fsolnpk{p}{k_t})\bigg](T_{p-1}).
  \end{aligned}
\end{equation}
We summarize as Theorem \ref{thm:par_err} below.

\begin{thm}
\label{thm:par_err}
\begin{equation}
\label{eq:err_decomp_parareal}
  (\psi, \ferrk{k_t}(T)) = \fterror + \auxerror + \cterror  + \Kterror,
\end{equation}
where
\begin{eqnarray}
\label{eq:fine_scale_temporal_error_comps}
\fterror  &=& \sum_{p=1}^{\NParSub} \frespuphi{p}{\fsolnpk{p}{k_t}}{\fadjp{p}} + (\fadjp{1},\cerrpk{1}{k_t}))(0), \nonumber \\
\auxerror &=& \sum_{p=2}^{\NParSub} \left[
                     \sum_{k=2}^{p-1}(\auxadjp{p}, \csolnpk{k-1}{k_t} - \csolnpk{k}{k_t})(T_{k-1})
                   + \sum_{k=1}^{p-1} \crespuphi{k}{\csolnpk{k}{k_t}}{\auxadjp{p}} + (\auxadjp{p}, \cerrpk{1}{k_t})(0) \right], \nonumber \\
\cterror &=& \sum_{p=2}^{\NParSub} (\fadjp{p} - \cadj, \csolnpk{p-1}{k_t} - \csolnpk{p}{k_t}) (T_{p-1}), \nonumber \\
\Kterror &=& \sum_{p=2}^{\NParSub} (\cadj, \fsolnpk{p-1}{k_t} - \fsolnpk{p}{k_t} )(T_{p-1}). \nonumber
\end{eqnarray}
\end{thm}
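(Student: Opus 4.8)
The plan is to assemble the identity in Theorem~\ref{thm:par_err} purely by bookkeeping: start from the error representation in the preceding lemma, substitute the auxiliary-adjoint identity for the one non-computable family of terms, and then group the resulting pieces into the four labelled sums $\fterror$, $\auxerror$, $\cterror$, and $\Kterror$. Everything needed has already been derived in the excerpt, so no new analytic estimate is required; the work is organizational.

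First I would recall the statement of the lemma~\eqref{eq:par_err_rep_without_aux_errs}, which expresses $(\psi,\ferrk{k_t}(T))$ as the sum of the fine-scale space-time residuals $\sum_{p=1}^{\NParSub}\frespuphi{p}{\fsolnpk{p}{k_t}}{\fadjp{p}}$, the initial-condition term $(\fadjp{1},\cerrpk{1}{k_t})(0)$, and a sum over $p=2,\dots,\NParSub$ of three jump/error contributions at $T_{p-1}$: the non-computable term $(\fadjp{p}-\cadj,\cerrpk{p-1}{k_t})(T_{p-1})$, the coarse-jump term $(\fadjp{p}-\cadj,\csolnpk{p-1}{k_t}-\csolnpk{p}{k_t})(T_{p-1})$, and the fine-jump term $(\cadj,\fsolnpk{p-1}{k_t}-\fsolnpk{p}{k_t})(T_{p-1})$.

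Next I would invoke the auxiliary-adjoint identity~\eqref{eq:aux_err}, which rewrites each non-computable term $(\fadjp{p}-\cadj,\cerrpk{p-1}{k_t})(T_{p-1})$ as $\sum_{k=2}^{p-1}(\auxadjp{p},\csolnpk{k-1}{k_t}-\csolnpk{k}{k_t})(T_{k-1}) + \sum_{k=1}^{p-1}\crespuphi{k}{\csolnpk{k}{k_t}}{\auxadjp{p}} + (\auxadjp{p},\cerrpk{1}{k_t})(0)$; this identity is itself obtained by applying~\eqref{eq:cres} with $\adjoint$ replaced by $\auxadjp{p}$ on each coarse subdomain $k=1,\dots,p-1$, telescoping in $k$, and using the terminal condition $\auxadjp{p}(T_{p-1})=\fadjp{p}(T_{p-1})-\cadj(T_{p-1})$ from~\eqref{eq:auxiliary_adjoint_equation}. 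Substituting this into the lemma's expression and collecting terms by their structural type gives exactly the four-way split: the residual terms with the fine adjoints together with the single initial term form $\fterror$; the entire auxiliary block (the two inner sums plus $(\auxadjp{p},\cerrpk{1}{k_t})(0)$) forms $\auxerror$; the coarse-jump terms weighted by $\fadjp{p}-\cadj$ form $\cterror$; and the fine-jump terms weighted by $\cadj$ form $\Kterror$. Then~\eqref{eq:err_decomp_parareal} follows by definition.

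There is essentially no hard step here—the only thing to be careful about is the index bookkeeping in the double sum over $p$ and $k$, in particular making sure the ranges $k=2,\dots,p-1$ and $k=1,\dots,p-1$ are carried correctly through the substitution and that the $(\auxadjp{p},\cerrpk{1}{k_t})(0)$ terms are not accidentally merged with the single $(\fadjp{1},\cerrpk{1}{k_t})(0)$ term (they have different adjoint weights). Since the entire computation leading to~\eqref{eq:err_decomp_parareal} has already been displayed immediately before the theorem statement, the proof can legitimately be one line: it is a restatement of that display with the four bracketed groups given names. I would write the proof simply as ``Combining~\eqref{eq:par_err_rep_without_aux_errs} and~\eqref{eq:aux_err} and grouping terms as indicated yields~\eqref{eq:err_decomp_parareal}.''
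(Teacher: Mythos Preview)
Your proposal is correct and matches the paper's approach exactly: the paper derives the displayed combination of \eqref{eq:par_err_rep_without_aux_errs} and \eqref{eq:aux_err} immediately before the theorem and then simply says ``We summarize as Theorem~\ref{thm:par_err} below,'' so the proof is precisely the one-line regrouping you describe.
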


We identify the following components that comprise the total error.
\begin{enumerate}
  \item $\fterror$ is the ``standard'' fine scale discretization error contribution.
  \item $\auxerror$ is the temporal auxiliary error contribution arising from the discontinuities in the fine scale adjoint solution at the synchronization points,
  \item $\cterror$ is the temporal coarse scale error contribution from the discontinuities in the coarse scale solution and the fine scale adjoint solution at synchronization points,
  \item $\Kterror$ is the temporal iteration error contribution arising from the difference between the (fine scale) solutions at the synchronization points.
\end{enumerate}
% As indicated in Remark \ref{rem:one} we will further decompose the ``standard'' fine scale error in to components arising due to the choice of fine scale temporal solver and due to the spatial solver technique.
Numerical examples demonstrating the accuracy of this error estimator are provided in section \ref{sec:numerical_results_Par}. While the analysis above concerns the fine scale error, we note that the coarse scale error may also be estimated as described in  \ref{sec:appendix_coarse_error}.

\subsection{Extension to other time integrators}

The analysis in the previous section assumed the implicit Euler method as the time integration procedure for both the coarse and fine scale solutions. However, the analysis remains valid provided the time integration method used at the fine and coarse scales satisfies ~\eqref{eq:fres} and \eqref{eq:cres}. 
% are satisfied for a number of numerical methods that are candidates for use as the coarse and  fine-scale solvers. 
These requirements are satisfied for variational methods as demonstrated in \S \ref{sec:var_mthds}. Moreover, these requirements are also satisfied for 
a number of numerical schemes which may be interpreted as a variational methods, e.g., Crank Nicolson, BDF, IMEX, etc \cite{collins2014_LaxWendroff, collins2014_explicit, CEG+2015, Chaudhry2019, chaudhry2020posteriori}.

% \medskip
% \begin{remark}
% \label{rem:one}
% {\bf A look ahead} The errors in \eqref{eq:err_decomp_parareal} can be further decomposed. Below, we further subdivide $\fterror$ in to its components as we consider the details of spatial domain decomposition in the fine scale solver. We note that $\fsolnpk{p}{k_t}$ is a solution defined on $\Omega \times \Intfpj{p}{n}$ and contains discretization errors due to its discretization in time on $\Intfpj{p}{n}$. Implicit choices for time discretization result in a steady (or spatial only) problem posed on $\Omega$. The spatial domain $\Omega$ is discretized using spatial finite elements and solved using domain decomposition, resulting in a further iterative and (spatial) discretization errors. These will be explored further in \S \ref{sec:spatial_errors}.
% \end{remark}

\section{\emph{A posteriori} analysis of the Space-Time Parallel Algorithm (STPA)}
\label{sec:spatial_errors}

In this section we derive error estimates for the error in the QoI computed from the fine scale solution of the STPA presented in \S \ref{sec:STPA}. That is, we seek to estimate $\mathcal{Q}(\soln - \fsolnpkl{p}{k_t}{\NDDIte}) = ( \psi, \soln(T) -\fsolnpkl{p}{k_t}{\NDDIte}(T))$.
Theorem \ref{thm:par_err} also gives the error in the STPA by replacing by replacing $\fsolnpk{p}{k_t}$ with $\fsolnpkl{p}{k_t}{\NDDIte}$. However, this result does not account for the error due to the spatial Schwarz domain decomposition iteration. In this section we extend the analysis to account for this source of error.

Let $\fsolnpjkl{p}{n}{k_t}{\NDDIte} = \fsolnpkl{p}{k_t}{\NDDIte}(\ftimepj{p}{n})$, where $\fsolnpjkl{p}{n}{k_t}{\NDDIte}$ is the domain decomposition solution to \eqref{eq:be_step_bilin form} on time domain $p$ for Parareal iteration $k_t$. More precisely, $\fsolnpjkl{p}{n}{k_t}{\NDDIte}$ is the domain decomposition solution to the problem: find $\fsolnpjk{p}{n}{k_t} \in  \Vspace{\Qfspace}$ such that
\begin{equation}
\label{eq:be_step_bilin form_pn}
  B^n(\fsolnpjk{p}{n}{k_t},v) = \lspace{n}(v),
\end{equation}
for all $v \in \Vspace{q}$, where the bilinear form $B^n(\cdot, \cdot)$ and the linear form $\lspace{n}(\cdot)$ are given in \eqref{eq:bilin_lin_forms_dd}, and $\fsolnpjk{p}{0}{k_t} = \fsolnpjkl{p}{0}{k_t}{\NDDIte} =  \csolnpk{p}{k_t}(x,T_{p-1})$.

For analysis purposes we also introduce an analytical solution: find $\solnpjk{p}{n}{k_t} \in H_0^1(\Omega)$ such that,
\begin{equation}
  \label{eq:be_step_bilin form_pn_analytical}
  B^n(\solnpjk{p}{n}{k_t},v) = \lspace{n}(v)  \qquad \forall v \in H^1_0(\Omega).
\end{equation}
Based on the discussion in section \ref{sec:be_as_dg0}, we may also consider $\solnpjk{p}{n}{k_t}$ in variational form.
%, allowing earlier results to apply. That is, for $t \in \Intfpj{p}{n}$, $\solnpjk{p}{n}{k_t}(t)$ is the linear interpolant between $\solnpjk{p}{n-1}{k_t}= \fsolnpjkl{p}{n-1}{k_t}{\NDDIte}$ and $\solnpjk{p}{n}{k_t}$. 
%Note that we have abused the notation to identify the variational solution with the non-variational solution.

\subsection{Overview of the strategy}

Trivially,
\begin{equation}
\label{eq:error_components_dd}
\begin{aligned}
\left( \soln(\ftimepj{p}{n}) - \fsolnpjkl{p}{n}{k_t}{\NDDIte}, \fadjp{p}(t_n) \right)
&= \underbrace{ \left( \soln(\ftimepj{p}{n})-\solnpjk{p}{n}{k_t}, \fadjp{p}(t_n)  \right)}_{\mathcal{E}_{I}}
 + \underbrace{\left( \solnpjk{p}{n}{k_t}- \fsolnpjkl{p}{n}{k_t}{\NDDIte}, \fadjp{p}(t_n) \right) }_{\mathcal{E}_{II}}
\end{aligned}
\end{equation}

\begin{enumerate}
  \item We estimate $\mathcal{E}_{I}$ by interpreting $\solnpjk{p}{n}{k_t}$ as a variational solution and using \emph{a posteriori} techniques developed in section \ref{sec:var_mthds}.
  \item We estimate $\mathcal{E}_{II}$ using the \emph{a posteriori} error analysis for overlapping domain decomposition presented in \cite{chaudhry2019posteriori}. This analysis allows us to further split $\mathcal{E}_{II}$ into iterative and discretization components.
\end{enumerate}

\subsection{Estimating $\mathcal{E}_{I}$}

In a derivation akin to that of \eqref{eq:err_sing_int}, interpreting $\solnpjk{p}{n}{k_t}$ as a space-time solution to the dG(0) method in time leads to,
\begin{equation}
\label{eq:err_be_inf_one_int}
\mathcal{E}_{I} = (\fadjp{p}(\ftimepj{p}{n}), \soln(\ftimepj{p}{n})  - \solnpjk{p}{n}{k_t})  = (\fadjp{p}(\ftimepj{p}{n}), \soln(\ftimepj{p}{n-1})  - \fsolnpjkl{p}{n-1}{k_t}{K_s})
+\frespnarg{p}{n}{\solnpjk{p}{n}{k_t}}{\fadjp{p}}
\end{equation}
where we also used the fact that $\solnpjk{p}{n}{k_t}(\ftimepj{p}{n-1}) = \fsolnpjkl{p}{n-1}{k_t}{K_s}$.

\subsection{Estimating $\mathcal{E}_{II}$}
\label{sec:be_rule_analysis}
The analysis of \cite{chaudhry2019posteriori} is used to estimate the error term $\mathcal{E}_{II}$. We define the global (spatial) adjoint problem: find $\adjdd{p}{n} \in H^1_0(\Omega)$ such that,
\begin{equation}
\label{eq:global_adj}
  B^n(v,\adjdd{p}{n}) = (\fadjp{p},v), \quad \forall v \in H^1_0(\Omega),
\end{equation}
and adjoint problems  on the spatial subdomains: find
 $\adjddadd{p}{n}{k_s}{i} \in H^{1}_0(\Omega_i)$ such that,
\begin{equation}\label{eq:additive_adjoints}
B_{i}^n\left(v, \adjddadd{p}{n}{k_s}{i} \right)
= \tau \sum_{j=1}^{P_s} \left\{ (\ddpsi{p}{n}{j},v)_{ij} - B_{ij}\left(v, \sum_{l=k_s+1}^{\NDDIte} \adjddadd{p}{n}{l}{i}  \right) \right \}, \quad \forall v \in H^{1}_0(\Omega_i),
\end{equation}
where $\ddpsi{p}{n}{i}$ is the restriction of  $\fadjp{p}(t_n)$ to $\Omega_i$,
and $B_i^n$ and $B_{ij}^n$ are the restrictions of $B^n$ to $\Omega_i$ and $\Omega_{ij}$ respectively. For a fixed $k_s$, the adjoint problems \eqref{eq:additive_adjoints} are independent for each $i$, so $\adjddadd{p}{n}{k_s}{i}$ may be computed backwards from $\NDDIte, \ \NDDIte-1, \ \NDDIte-2, \cdots, 1$ in parallel analogous to the solution strategy in the additive Schwarz Algorithm \ref{alg:additive_basic}.

\begin{thm}
\label{thm:dd_errs}
\begin{equation}
\label{eq:err_dd_only}
\mathcal{E}_{II} = \left( \solnpjk{p}{n}{k_t} - \fsolnpjkl{p}{n}{k_t}{\NDDIte}, \fadjp{p}(t_n) \right) = \fterrKspn{p}{n} + \fterrNspn{p}{n}
\end{equation}
where
\begin{align}
\fterrNspn{p}{n} &= \sum_{i=1}^{\NDDSub} \sum_{k_s = 1}^{\NDDIte}   \lspace{n}(\adjddadd{p}{n}{k_s}{i}) - B_i^n(\fsolpnkidd{p}{n}{i}{k_s},\adjddadd{p}{n}{k_s}{i})\\
\fterrKspn{p}{n} &= \lspace{n}(\adjdd{p}{n}) - B^n(\fsolnpjkl{p}{n}{k_t}{\NDDIte},\adjdd{p}{n})  - \fterrNspn{p}{n}
\end{align}
where $\fsolpnkidd{p}{n}{i}{k_s}$ are the spatial subdomain solutions on spatial subdomain $i$ defined by \eqref{eq:additive_basic_local}.
\end{thm}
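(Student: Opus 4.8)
The plan is to reduce Theorem~\ref{thm:dd_errs} to the known \emph{a posteriori} error analysis for overlapping additive Schwarz domain decomposition from \cite{chaudhry2019posteriori} applied to the single linear system \eqref{eq:be_step_bilin form_pn}. The key observation is that $\mathcal{E}_{II}$ is a linear functional of the domain-decomposition error $\solnpjk{p}{n}{k_t} - \fsolnpjkl{p}{n}{k_t}{\NDDIte}$ at a \emph{fixed} time node $\ftimepj{p}{n}$ and a \emph{fixed} Parareal iteration $k_t$, with the right-hand side data $\lspace{n}(\cdot)$ treated as given (it depends on $\fsolnpjkl{p}{n-1}{k_t}{\NDDIte}$, which is already fixed). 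So nothing about the Parareal coupling or the time stepping is actually active here — we are looking at a purely spatial problem $B^n(u,v) = \lspace{n}(v)$ and asking for the error in the linear functional $(\,\cdot\,, \fadjp{p}(t_n))$ after $\NDDIte$ additive Schwarz iterations starting from the initial guess $\fsolnpjkl{p}{n-1}{k_t}{\NDDIte}$.

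First I would write down the global adjoint identity: since $\adjdd{p}{n}$ solves \eqref{eq:global_adj}, and since both $\solnpjk{p}{n}{k_t}$ and $\fsolnpjkl{p}{n}{k_t}{\NDDIte}$ lie in (or are interpreted in) the appropriate space, we get
\begin{equation}
\mathcal{E}_{II} = (\solnpjk{p}{n}{k_t} - \fsolnpjkl{p}{n}{k_t}{\NDDIte}, \fadjp{p}(t_n)) = B^n(\solnpjk{p}{n}{k_t} - \fsolnpjkl{p}{n}{k_t}{\NDDIte}, \adjdd{p}{n}) = \lspace{n}(\adjdd{p}{n}) - B^n(\fsolnpjkl{p}{n}{k_t}{\NDDIte}, \adjdd{p}{n}),
\end{equation}
using $B^n(\solnpjk{p}{n}{k_t}, \adjdd{p}{n}) = \lspace{n}(\adjdd{p}{n})$ from \eqref{eq:be_step_bilin form_pn_analytical}. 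This is exactly the total spatial error $\lspace{n}(\adjdd{p}{n}) - B^n(\fsolnpjkl{p}{n}{k_t}{\NDDIte}, \adjdd{p}{n})$, which by definition of $\fterrKspn{p}{n}$ equals $\fterrNspn{p}{n} + \fterrKspn{p}{n}$; so the decomposition \eqref{eq:err_dd_only} holds trivially once $\fterrKspn{p}{n}$ is defined as stated. The real content is therefore to show that $\fterrNspn{p}{n}$, as written, is the genuine \emph{iteration} (subdomain-solver) part — i.e., that it captures the error contributed by the additive Schwarz sweeps — so that the split into $\fterrNspn{p}{n}$ (iteration) and $\fterrKspn{p}{n}$ (the remaining discretization/closure part) is meaningful and matches \cite{chaudhry2019posteriori}. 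To do this I would expand $B^n(\fsolnpjkl{p}{n}{k_t}{\NDDIte}, \adjdd{p}{n})$ by telescoping over the Schwarz iterates $\fsolnjk{n}{0}, \fsolnjk{n}{1}, \dots, \fsolnjk{n}{\NDDIte}$ using the global update formula \eqref{eq:additive_basic_global}, converting each increment into subdomain residuals via the local solves \eqref{eq:additive_basic_local}, and then recognize the resulting subdomain-residual sum, paired against the subdomain adjoints $\adjddadd{p}{n}{k_s}{i}$ solving \eqref{eq:additive_adjoints}, as precisely $\fterrNspn{p}{n} = \sum_{i=1}^{\NDDSub} \sum_{k_s=1}^{\NDDIte} \lspace{n}(\adjddadd{p}{n}{k_s}{i}) - B_i^n(\fsolpnkidd{p}{n}{i}{k_s}, \adjddadd{p}{n}{k_s}{i})$. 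The defining relation \eqref{eq:additive_adjoints} for the subdomain adjoints — with its backward coupling through $\sum_{l=k_s+1}^{\NDDIte}$ and the Richardson parameter $\tau$ — is designed exactly so that this telescoping closes; invoking the corresponding theorem of \cite{chaudhry2019posteriori} directly gives the identity.

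The main obstacle is bookkeeping the Schwarz iteration carefully: one must track how the partition-of-unity-like extension operators $\Pi_i$ and the factor $(1 - \tau \NDDSub)$ in \eqref{eq:additive_basic_global} interact with the restrictions $B_i^n$, $B_{ij}^n$ of the bilinear form, and verify that the cross terms on the overlaps $\Omega_i \cap \Omega_j$ are exactly those absorbed by the $B_{ij}(v, \sum_{l=k_s+1}^{\NDDIte}\adjddadd{p}{n}{l}{i})$ term in \eqref{eq:additive_adjoints}. This is the technical heart of \cite{chaudhry2019posteriori}, so the cleanest route is to state Theorem~\ref{thm:dd_errs} as an application of that prior result to the fixed linear system \eqref{eq:be_step_bilin form_pn} with right-hand side \eqref{eq:bilin_lin_forms_dd}, and point to \cite{chaudhry2019posteriori} for the telescoping computation, noting only that the hypotheses there (consistency of $\mathcal{T}_h$ with the domain decomposition, overlap, convergence of the Richardson iteration) are guaranteed by the assumptions in \S\ref{sec:STPA}. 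A brief remark should also confirm that interpreting $\solnpjk{p}{n}{k_t}$ in dG(0) variational form (as in \S\ref{sec:be_as_dg0}) does not affect this purely spatial argument, since at a fixed time node the dG(0) solution coincides nodally with the implicit Euler iterate.
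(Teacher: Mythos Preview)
Your approach matches the paper's own treatment: the paper does not give an independent proof but simply states that the proof of Theorem~\ref{thm:dd_errs} is provided in \cite{chaudhry2019posteriori}, and your reduction via the global adjoint identity $\mathcal{E}_{II} = \lspace{n}(\adjdd{p}{n}) - B^n(\fsolnpjkl{p}{n}{k_t}{\NDDIte}, \adjdd{p}{n})$ followed by an appeal to that prior result is exactly the intended route. One small correction to your commentary (not to the proof itself): you have the interpretations of $\fterrNspn{p}{n}$ and $\fterrKspn{p}{n}$ swapped --- in the paper's convention $\fterrNspn{p}{n}$ (the subdomain-residual sum) is the spatial \emph{discretization} contribution from the finite-dimensional local solves, while $\fterrKspn{p}{n}$ is the domain-decomposition \emph{iteration} contribution from using only $\NDDIte$ sweeps; this does not affect the algebraic identity, which as you note holds trivially from the definition of $\fterrKspn{p}{n}$.
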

The proof of Theorem~\ref{thm:dd_errs} is provided in \cite{chaudhry2019posteriori}.
The term $\fterrNspn{p}{n}$  quantifies the discretization error contribution (that is, due to using the spaces $\Vspaceddbc{\Qfspace}{i}{k_s}$) while the term $\fterrKspn{p}{n}$ quantifies the domain decomposition iteration error contribution (that is, due to using a finite number $\NDDIte$ iterations).

\subsection{Estimating $\mathcal{E}_{I} + \mathcal{E}_{II}$}

Combining \eqref{eq:err_be_inf_one_int} and \eqref{eq:err_dd_only},
\begin{equation}
\label{eq:one_int_dd_b}
\begin{aligned}
\mathcal{E}_{I} + \mathcal{E}_{II} &=
(\fadjp{p}(\ftimepj{p}{n}), \soln(\ftimepj{p}{n-1})  - \fsolnpjkl{p}{n-1}{k_t}{\NDDIte})
+\frespnarg{p}{n}{\fsolnpjkl{p}{n}{k_t}{\NDDIte}}{\fadjp{p}}\\
&= (\fadjp{p}(\ftimepj{p}{n}), \soln(\ftimepj{p}{n-1})  - \fsolnpjkl{p}{n-1}{k_t}{\NDDIte})
+\frespnarg{p}{n}{\solnpjk{p}{n}{k_t}}{\fadjp{p}}
 + \fterrKspn{p}{n} + \fterrNspn{p}{n},
\end{aligned}
\end{equation}
To provide a comparison with the two part construction above, following the direct approach prescribed in \eqref{eq:err_sing_int} we have,
% %
\begin{equation}
\label{eq:one_int_dd}
\mathcal{E}_{I} + \mathcal{E}_{II} = (\fadjp{p}(\ftimepj{p}{n}), u(\ftimepj{p}{n})  - \fsolnpjkl{p}{n}{k_t}{\NDDIte})  = (\fadjp{p}(\ftimepj{p}{n}), u(\ftimepj{p}{n-1})  - \fsolnpjkl{p}{n-1}{k_t}{\NDDIte})
+\frespnarg{p}{n}{\fsolnpjkl{p}{n}{k_t}{\NDDIte}}{\fadjp{p}}.
\end{equation}
% Summing \eqref{eq:err_be_inf_one_int} and \eqref{eq:err_dd_only} and comparing with \eqref{eq:one_int_dd} we see that
%
Comparing \eqref{eq:one_int_dd} and \eqref{eq:one_int_dd_b},
\begin{equation}
 \frespnarg{p}{n}{\fsolnpjkl{p}{n}{k_t}{\NDDIte}}{\fadjp{p}}
= \frespnarg{p}{n}{\solnpjk{p}{n}{k_t}}{\fadjp{p}}
 + \fterrKspn{p}{n} + \fterrNspn{p}{n}.
\end{equation}
%$since U(t_n^{p \{k_t,K_s\}})-u(t_n^p)=U(t_n^{p, \{k_t,K_s\}}) - U_n^{p,k_t} + U_n^{p,k_t}  - u(t_n^p)$
Rearranging,
\begin{equation}
\label{eq:ParDD_computable_resid}
\frespnarg{p}{n}{\solnpjk{p}{n}{k_t}}{\fadjp{p}} = \frespnarg{p}{n}{\fsolnpjkl{p}{n}{k_t}{\NDDIte}}{\fadjp{p}}  -\fterrKspn{p}{n} - \fterrNspn{p}{n}.
\end{equation}
Since all terms on the RHS of \eqref{eq:ParDD_computable_resid} are computable, so is $\frespnarg{p}{n}{\solnpjk{p}{n}{k_t}}{\fadjp{p}}$.

\subsection{Summing contributions over time subdomain $p$}
Using \eqref{eq:fine_scale_residual}, the residual over the time domain $[T_{p-1},T_p]$ is
\begin{equation}
\label{eq:res_dd_one_time_sub_domain}
  \frespuphi{p}{\fsolnpjkl{p}{n}{k_t}{\NDDIte}}{\fadjp{p}} = \sum_{n=1}^{{N}_p} \frespnarg{p}{n}{\fsolnpjkl{p}{n}{k_t}{\NDDIte}}{\fadjp{p}} = \sum_{n=1}^{{N}_p} \left( \frespnarg{p}{n}{\solnpjk{p}{n}{k_t}}{\fadjp{p}}
 + \fterrKspn{p}{n} + \fterrNspn{p}{n} \right) = \fterrftp{p} +  \fterrNsp{p} +  \fterrKsp{p}
\end{equation}
where
\begin{equation}
  \fterrftp{p} = \sum_{n=1}^{{N}_p}\frespnarg{p}{n}{\solnpjk{p}{n}{k_t}}{\fadjp{p}}, \qquad \fterrKsp{p} = \sum_{n=1}^{{N}_p}\fterrKspn{p}{n}, \qquad \fterrNsp{p}= \sum_{n=1}^{{N}_p} \fterrNspn{p}{n}.
\end{equation}

\subsection{Error Representation for Space-Time Parallel Domain-Decomposition-Parareal Algorithm}

Let $\ferrkl{k_t }{\NDDIte} = \soln(T) - \fsolnpkl{p}{k_t}{\NDDIte}(T)$.

\begin{thm}
\label{thm:par_dd_err}
\begin{equation}
  (\psi, \ferrkl{k_t}{\NDDIte}(T)) = \fterrft +  \fterrNs +  \fterrKs + \auxerror + \cterror  + \Kterror,
\end{equation}
where $\auxerror, \cterror$ and  $\Kterror$ are defined in Theorem~\ref{thm:par_err} by \eqref{eq:fine_scale_temporal_error_comps} and,

% \begin{eqnarray}
% \label{eq:fine_scale_temporal_error_comps_dd}
% \fterrft  &=& \sum_{p=1}^{\NParSub} \fterrftp{p} + (\fadjp{1},\cerrpk{1}{k_t}))(0), \nonumber \\
% \fterrNs &=& \sum_{p=1}^{\NParSub} \fterrNsp{p},\\
% \fterrKs &=& \sum_{p=1}^{\NParSub} \fterrKsp{p}.
% \end{eqnarray}

\begin{equation}
  \fterrft  = \sum_{p=1}^{\NParSub} \fterrftp{p} + (\fadjp{1},\cerrpk{1}{k_t})(0), \qquad \fterrNs = \sum_{p=1}^{\NParSub} \fterrNsp{p}, \qquad \fterrKs = \sum_{p=1}^{\NParSub} \fterrKsp{p}.
\end{equation}
\end{thm}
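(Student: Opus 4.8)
The plan is to assemble the claimed six-term decomposition from two ingredients already in hand: the temporal error representation of Theorem~\ref{thm:par_err}, and the per-temporal-subdomain splitting of the fine-scale residual in \eqref{eq:res_dd_one_time_sub_domain}. The only term in Theorem~\ref{thm:par_err} affected by introducing the spatial Schwarz iteration is the fine-scale contribution $\fterror$; the remaining contributions $\auxerror$, $\cterror$, $\Kterror$ (compare \eqref{eq:fine_scale_temporal_error_comps}) involve only the coarse-scale solution, the coarse and fine adjoints, and differences of the iterates at the synchronization times, none of which change in form.

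First I would argue that Theorem~\ref{thm:par_err} applies verbatim with $\fsolnpk{p}{k_t}$ replaced by $\fsolnpkl{p}{k_t}{\NDDIte}$. Its proof uses only: (i) the residual identity \eqref{eq:fres}; (ii) the coarse-scale identity \eqref{eq:cres} and the auxiliary-adjoint bookkeeping built from it in \eqref{eq:aux_err}; (iii) the synchronization matching $\fsolnpk{p}{k_t}(T_{p-1})=\csolnpk{p}{k_t}(T_{p-1})$; and (iv) $\fadjp{p}(T_p)=\cadj(T_p)$. Items (ii) and (iv) are unchanged, and (iii) holds for the STPA solution because the Schwarz iteration on each $[T_{p-1},T_p]$ is initialized with $\fsolnpjkl{p}{0}{k_t}{\NDDIte}=\csolnpk{p}{k_t}(\cdot,T_{p-1})$. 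For (i), the crucial observation is that \eqref{eq:fres}---being obtained from the adjoint equation \eqref{eq:generic_phi_p}, the PDE \eqref{eq:pde}, and integration by parts in time---never invokes Galerkin orthogonality of the fine-scale solution, so it is an identity valid for \emph{any} piecewise-in-time trial function, in particular the non-Galerkin STPA iterate $\fsolnpkl{p}{k_t}{\NDDIte}$. Consequently Theorem~\ref{thm:par_err} gives
\[
(\psi,\ferrkl{k_t}{\NDDIte}(T)) = \sum_{p=1}^{\NParSub}\frespuphi{p}{\fsolnpkl{p}{k_t}{\NDDIte}}{\fadjp{p}} + (\fadjp{1},\cerrpk{1}{k_t})(0) + \auxerror + \cterror + \Kterror .
\]

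Next I would substitute the spatial decomposition \eqref{eq:res_dd_one_time_sub_domain}, namely $\frespuphi{p}{\fsolnpkl{p}{k_t}{\NDDIte}}{\fadjp{p}} = \fterrftp{p} + \fterrNsp{p} + \fterrKsp{p}$, which was itself derived by rewriting each single-step residual through the analytical auxiliary solution $\solnpjk{p}{n}{k_t}$ (see \eqref{eq:err_be_inf_one_int} and \eqref{eq:ParDD_computable_resid}) and applying Theorem~\ref{thm:dd_errs} to peel off the Schwarz discretization and iteration pieces. Summing over $p=1,\dots,\NParSub$, absorbing the term $(\fadjp{1},\cerrpk{1}{k_t})(0)$ into the definition of $\fterrft$, and setting $\fterrNs=\sum_p\fterrNsp{p}$, $\fterrKs=\sum_p\fterrKsp{p}$ yields exactly $(\psi,\ferrkl{k_t}{\NDDIte}(T)) = \fterrft + \fterrNs + \fterrKs + \auxerror + \cterror + \Kterror$, which is the statement.

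There is no genuinely hard analytic step: the result is essentially bookkeeping over the results of this section, so the point needing care is the claim in the first step that the proof of Theorem~\ref{thm:par_err} carries over without modification---one must check that none of its telescoping across the synchronization times $T_p$ secretly relies on the fine-scale solution being the exact output of $\fsolvernoargs$ rather than merely living in the fine space-time trial space. If one prefers to avoid that verification, an equivalent route is to re-derive from scratch: sum \eqref{eq:one_int_dd_b} over the fine time steps $n=1,\dots,\Nftimep{p}$ within each temporal subdomain (telescoping the terms $(\fadjp{p}(\ftimepj{p}{n}),\soln(\ftimepj{p}{n-1})-\fsolnpjkl{p}{n-1}{k_t}{\NDDIte})$), then sum over $p$ exactly as in the proof of Theorem~\ref{thm:par_err} while inserting the auxiliary-adjoint identity \eqref{eq:aux_err}; the spatial terms $\fterrNsp{p}$ and $\fterrKsp{p}$ ride along untouched through both summations.
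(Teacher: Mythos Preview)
Your proposal is correct and follows essentially the same approach as the paper, which states only that the result follows from Theorem~\ref{thm:par_err} by replacing $\fsolnpk{p}{k_t}$ with $\fsolnpkl{p}{k_t}{\NDDIte}$ and then invoking \eqref{eq:res_dd_one_time_sub_domain}. Your additional justification that \eqref{eq:fres} is a residual identity valid for any piecewise-in-time trial function (not requiring Galerkin orthogonality) is a useful clarification that the paper leaves implicit.
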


\begin{proof}
The proof follows directly from Theorem~\ref{thm:par_err} by replacing $\fsolnpk{p}{k_t}$ with $\fsolnpkl{p}{k_t}{\NDDIte}$ and then using \eqref{eq:res_dd_one_time_sub_domain}.
\end{proof}

We identify the following additional error components.
\begin{enumerate}
  \item $\fterrft$ is the error contribution due to the use of implicit Euler time integration.
  \item $\fterrNs$ is the error contribution due to using a finite dimensional space for the solution of domain decomposition.
  \item $\fterrKs$ is the error contribution due to using a finite number, $\NDDIte$, iterations of domain decomposition.
\end{enumerate}

\section{Numerical results}
\label{sec:numerical_results}

We present numerical results to support the analyses of both section  \ref{sec:temporal_errors} and section \ref{sec:spatial_errors}, highlighting the accuracy of the error estimates developed. Accordingly, we first consider the error analysis for the Time Parallel  Algorithm  of section \ref{sec:numerical_results_Par} and then consider the effect of a further spatial domain decomposition iteration in the Space-Time Parallel  Algorithm of section \ref{sec:numerical_results_Par_DD}. The implicit Euler method is used in the fine and coarse scale solvers for these two sections. In section~\ref{sec:numerical_results_Par_CG} we demonstrate the accuracy of the error estimates when a different time integration, a cG method, is employed in the coarse and fine scale solvers.

 The accuracy of the \emph{a posteriori} error estimates is measured by the effectivity ratio $\gamma$, where
\begin{equation}
\label{eq:effectivity_ratio}
\gamma = \frac{\hbox{Estimated error}}{\hbox{True error}}.
\end{equation}
The bilinear and linear forms considered in the numerical examples are
\begin{equation}
\label{eq:temporal_errors_test_equation}
a(u,v) = -(\nabla u, \nabla v) \quad \hbox{and} \quad l(v) = (f(x),v)
\end{equation}
where
%Expression('sin(B*pi*x[0])* (C*cos(A*pi*t) - A*pi*sin(A*pi*t))',A=self.tosc,B=self.sosc, C=self.sosc*self.sosc*np.pi*np.pi,t = t, degree=self.true_deg)
\begin{equation}
\label{eq:temporal_errors_test_rhs}
% f(x) = \sin(\pi x)(\pi^2 \cos(4 \pi t) -  4 \pi \sin(4 \pi t))
f(x) = \sin( \mu \pi x)(\mu^2 \pi^2 \cos(\nu \pi t) -  \nu \pi \sin(\nu \pi t)).
\end{equation}
This choice of $f(x)$ corresponds to the true solution $ u(x,t) = \cos(\nu \pi t) \sin(\mu \pi x)$.
In all examples, the final time $T=2$ and the quantity of interest at the final time $T$ is defined by the ``adjoint data''
\begin{equation}
\label{eq:temporal_errors_test_adjoint_data}
\psi(x) =  \left \{ \qquad
\begin{aligned}
10000 [ (x-0.2)^2  (x-0.6)^2 ], \qquad &0.2 <  x < 0.6,\\
0,  \qquad\qquad\qquad          & \text{ otherwise}.
\end{aligned}
\qquad \right.
\end{equation}
This choice of $\psi(x)$ corresponds to a local weighted average of the solution around $x=0.4$ at the final time.
The ratio of number of fine and coarse time steps is denoted by $\reffactime = \Nftime/\Nctime$. For the spatial discretization, the number of spatial finite elements at the two scales remained fixed, i.e., $\Nfspace = \Ncspace$, but different degrees of interpolation were employed according to whether the corresponding temporal integration was performed on the coarse or fine scales. A lower degree of spatial interpolation, $\Qcspace$, was employed when constructing the solution on the coarse  scale, and a higher degree of spatial interpolation, $\Qfspace$, was employed on the same spatial mesh when constructing the solution at the fine  scale, i.e., $\hat{q}_s < q_s$. The notation used in this section is also summarized in \ref{sec:appendix}.

Two different time steps were employed for the temporal integration, a coarse time step and a smaller, fine time step. The implicit Euler method was chosen as the time integrator for the coarse  and fine scale solvers $\csolvernoargs$ and $\fsolvernoargs$ in \S \ref{sec:numerical_results_Par} and \S \ref{sec:numerical_results_Par_DD}. A continuous Galerkin method was used to obtain the results in \S \ref{sec:numerical_results_Par_CG}.
%The degree of interpolation on each of these time steps was chosen to be the same. A higher degree of interpolation on these two time steps was chosen to solve the temporal component of adjoint problems. 
%A different approach was taken for the spatial discretization. 
%
The adjoint solutions in \eqref{eq:coarse_adjoint}, \eqref{eq:fine_adjoint_equation}, \eqref{eq:auxiliary_adjoint_equation}, \eqref{eq:global_adj} and \eqref{eq:additive_adjoints} need to be approximated. In all cases, the same temporal and spatial meshes were used as those used to compute the numerical solutions, however, higher degree approximations were employed to obtain accurate estimates. The adjoint solutions corresponding to \eqref{eq:coarse_adjoint}, \eqref{eq:fine_adjoint_equation}and \eqref{eq:auxiliary_adjoint_equation} were approximated on the space-time slab $S^p_{n}$  using the cG(3) method  in the space $\Vspacetime{n}{3}{3}$ (see \eqref{Vspacetimef} for the definition of this space). The adjoint solutions \eqref{eq:global_adj} and \eqref{eq:additive_adjoints}, needed for the analysis of STPA, were approximated by using a spatial finite element employing continuous piecewise cubic polynomial functions.
%%%%%%%%%%%%%%%%%%%%%%%%%%%%%%%%%%%%%%%%%%%%%%%%%%%%%%%%%%%%%%%%%%%%%%%%%%%%%%%%%%%%%%%%%%%%%%%%%%%%
%                                                                                                  %
%   Parareal                                                                                       %
%                                                                                                  %
%%%%%%%%%%%%%%%%%%%%%%%%%%%%%%%%%%%%%%%%%%%%%%%%%%%%%%%%%%%%%%%%%%%%%%%%%%%%%%%%%%%%%%%%%%%%%%%%%%%%

\subsection{Paralellism in time only: Time Parallel  Algorithm}
\label{sec:numerical_results_Par}

We first consider the \emph{a posteriori} error estimate given by Theorem \ref{thm:par_err}, Eq. \eqref{eq:fine_scale_temporal_error_comps} in section \ref{sec:temporal_errors}.  
%A higher degree spatial interpolation than both of these was used when solving the spatial component of adjoint problems.
%
The Parareal algorithm presents numerous discretization choices. We investigate the effect of the number of Parareal iterations in section \ref{sec:example_Par_iterations}, and the effect of the number of temporal subdomains in section \ref{sec:example_Par_subdomains}. We then consider the effect of the fine time scale and the coarse time scale in sections \ref{sec:example_Par_fine_time_scale} and \ref{sec:example_Par_coarse_time_scale} respectively. In sections \ref{sec:example_Par_iterations} to \ref{sec:example_Par_coarse_time_scale}, we ensure the temporal errors dominate spatial errors by choosing a modestly large number of spatial elements, so that the effects of changes to the spatial discretization parameters can be observed.  Finally we consider the effect of the spatial discretization in section \ref{sec:example_Par_space_scale}. In all cases the effectivity ratio of the error estimator is 1.00. For these examples $\nu = 4, \mu = 1$ in the function $f(x)$ in equation \eqref{eq:temporal_errors_test_rhs}.

 The examples not only provide convincing evidence of the accuracy of the error estimator,
but illustrate the importance of  identifying and estimating distinct error contributions. We
observe fortuitous cancellation in the first example so that further refinement actually
increases the error. The second example demonstrates a situation in which the refinement
strategy has no effect on the overall error since it does not address the dominant source
of error. Later examples show that when the dominant error term is targeted and it is at
least  an order of magnitude larger than all the other error contributions, refinements strategies
have the anticipated effect on the overall error.

\subsubsection{Effect of the number of Parareal iterations}
\label{sec:example_Par_iterations}

The total error in Table \ref{tab:example_Par_iterations} initially decreases with the number of Parareal iterations ($\NParIte$), but then increases somewhat. The initial decrease is expected, since for $\NParIte=1$, the iteration error $\Kterror$ is the dominant source of error, and hence increasing the number of iterations leads to a decrease in this iteration error, and hence the overall error as well. However, after a single Parareal iteration, the iteration error $\Kterror$ is no longer the dominant error component. Rather the discretization error $\fterror$, which remains essentially constant during the iterative process, becomes the dominant error. Moreover, the discretization and iteration errors have opposite signs, and there is a fortuitous cancellation of error for $\NParIte=2$ between these two terms. When the number of iterations is increased to 3, $\Kterror$ decreases as expected, but so does the cancellation of error between this term and $\fterror$, and hence the total error shows a modest increase.
%The reduction in error after three iterations is due not only to a reduction in the iterative error, but is further enhanced by a fortuitous cancellation of error between the discretization and iteration errors, which have opposite signs.

\begin{table}[H]
\centering
\begin{tabular}{c|c|c|c|c|c|c}
\toprule
$\NParIte$ & Est. Err. & $\gamma$ &  $\fterror$ & $\Kterror$ & $\cterror$ & $\auxerror$  \\
\midrule
1&  -1.02e-01 & 1.00e+00 &  5.10e-02 &  -1.53e-01 & 0.00e+00 &  -2.30e-06\\
2&  4.39e-02 &  1.00e+00 &  5.82e-02 &  -1.43e-02 & 2.86e-06 &  -2.91e-06\\
3&  5.73e-02 &  1.00e+00 &  5.89e-02 &  -1.59e-03 & 3.11e-06 &  -2.96e-06\\
\bottomrule
\end{tabular}
\caption{$\Nctime$ = 20,$\reffactime$ = 16, $\NParSub$ = 10,    $\Ncspace$ = 20, $\Qcspace$ = 1, $\Qfspace$ = 2 }
\label{tab:example_Par_iterations}
\end{table}

\subsubsection{Effect of the number of temporal subdomains}
\label{sec:example_Par_subdomains}

Increasing the number of temporal subdomains ($\NParSub$) is not expected to affect the discretization error $\fterror$ which is determined by the spatial and temporal element scales, but is expected to increase the iteration error $\Kterror$. Both are supported by the results in Table \ref{tab:example_Par_subdomains} below. 
%Further, the temporal coarse scale error $\cterror$ and auxiliary error $\auxerror$ are expected to increase, since these are both related to the discontinuities of the forward and adjoint solutions that arise between temporal subdomains. Notwithstanding, 
The discretization error remains the largest error for all choices of the number of temporal subdomains.

\begin{table}[H]
\centering
\begin{tabular}{c|c|c|c|c|c|c}
\toprule
$\NParSub$ & Est. Err. & $\gamma$ &  $\fterror$ & $\Kterror$ & $\cterror$ & $\auxerror$  \\
\midrule
2&  1.16e-01 &  1.00e+00 &  1.16e-01 &  3.53e-07 &  -5.81e-09 & -3.58e-08\\
5&  1.16e-01 &  1.00e+00 &  1.16e-01 &  3.68e-05 &  6.18e-09 &  1.02e-07\\
10& 1.12e-01 &  1.00e+00 &  1.15e-01 &  -3.26e-03 & 1.52e-08 &  9.56e-08\\
\bottomrule
\end{tabular}
\caption{$\Nctime$ = 40,$\reffactime$ = 4, $\NParIte$ = 2,    $\Ncspace$ = 20, $\Qcspace$ = 1, $\Qfspace$ = 2 }
\label{tab:example_Par_subdomains}
\end{table}

\subsubsection{Effect of the fine time scale}
\label{sec:example_Par_fine_time_scale}

As is evident in Table \ref{tab:example_Par_fine_time_scale}, increasing the ratio $\reffactime$ between the temporal fine and coarse scales reduces the discretization error $\fterror$. Since this is the dominant error, this also leads to a decrease in the total error.
%while leaving the other error components essentially unchanged.
\begin{table}[H]
\centering
\begin{tabular}{c|c|c|c|c|c|c}
\toprule
\reffactime & Est. Err. & $\gamma$ &  $\fterror$ & $\Kterror$ & $\cterror$ & $\auxerror$  \\
\midrule
2&  7.21e-01 &  1.00e+00 &  7.31e-01 &  -9.49e-03 & 1.78e-04 &  -3.77e-04\\
4&  3.82e-01 &  1.00e+00 &  4.06e-01 &  -2.31e-02 & 2.75e-04 &  -4.08e-04\\
\bottomrule
\end{tabular}
\caption{$\Nctime$ = 10,$\NParSub$ = 10, $\NParIte$ = 2, $\Qctime$ = 1, $\Qftime$ = 1,   $\Ncspace$ = 20, $\Qcspace$ = 1, $\Qfspace$ = 2 }
\label{tab:example_Par_fine_time_scale}
\end{table}

\subsubsection{Effect of the coarse time scale}
\label{sec:example_Par_coarse_time_scale}

The results in Table \ref{tab:example_Par_coarse_time_scale} demonstrate that improving the accuracy of the coarse temporal solution reduces \emph{all} components of the error, provided that the temporal errors dominate the spatial errors.

\begin{table}[H]
\centering
\begin{tabular}{c|c|c|c|c|c|c}
\toprule
$\Nctime$ & Est. Err. & $\gamma$ &  $\fterror$ & $\Kterror$ & $\cterror$ & $\auxerror$  \\
\midrule
10& 7.21e-01 &  1.00e+00 &  7.31e-01 &  -9.49e-03 & 1.78e-04 &  -3.77e-04\\
20& 4.09e-01 &  1.00e+00 &  4.13e-01 &  -3.91e-03 & 1.42e-06 &  -2.94e-06\\
\bottomrule
\end{tabular}
\caption{$\reffactime$ = 2, $\NParSub$ = 10, $\NParIte$ = 2,    $\Ncspace$ = 20, $\Qcspace$ = 1, $\Qfspace$ = 2 }
\label{tab:example_Par_coarse_time_scale}
\end{table}

\subsubsection{Effect of the spatial scale}
\label{sec:example_Par_space_scale}

For the numerical results presented in Table \ref{tab:example_Par_space_scale} we have increased the number of coarse temporal elements to 
$100$, 
$\reffactime$ to $8$, 
and the number of Parareal iterations to $6$, in order to ensure the temporal errors are small compared with the spatial errors. While decreasing the discretization error as anticipated, improving the spatial accuracy is also seen to decrease the coarse temporal and auxiliary errors (though not monotonically) since these both have a spatial component.

\begin{table}[H]
\centering
\begin{tabular}{c|c|c|c|c|c|c}
\toprule
$\Ncspace$ & Est. Err. & $\gamma$ &  $\fterror$ & $\Kterror$ & $\cterror$ & $\auxerror$  \\
\midrule
   5& 6.61e-02 &  1.00e+00 &  6.61e-02 &  1.00e-10 &  1.16e-07 &  9.08e-07\\
  10& 3.40e-02 &  1.00e+00 &  3.40e-02 &  9.78e-11 &  2.20e-07 &  1.92e-06\\
  20& 2.64e-02 &  1.00e+00 &  2.64e-02 &  9.72e-11 &  2.51e-09 &  6.32e-08\\
\bottomrule
\end{tabular}
\caption{$\Nctime$ = 100, $\reffactime$ = 8, $\NParSub$ = 10, $\NParIte$ = 6,    $\Qcspace$ = 1, $\Qfspace$ = 1 }
\label{tab:example_Par_space_scale}
\end{table}

%\todo{Is $q_s$ really equal to one for this example, unlike all the others?}

%%%%%%%%%%%%%%%%%%%%%%%%%%%%%%%%%%%%%%%%%%%%%%%%%%%%%%%%%%%%%%%%%%%%%%%%%%%%%%%%%%%%%%%%%%%%%%%%%%%%
%                                                                                                  %
%   Parareal-Domain Decomposition                                                                  %
%                                                                                                  %
%%%%%%%%%%%%%%%%%%%%%%%%%%%%%%%%%%%%%%%%%%%%%%%%%%%%%%%%%%%%%%%%%%%%%%%%%%%%%%%%%%%%%%%%%%%%%%%%%%%%

\subsection{Space-Time Parallel Algorithm}
\label{sec:numerical_results_Par_DD}

The following results were obtained through a combination of the Parareal integration in time and additive Schwarz domain decomposition in space. 
%Our  discretization is therefore 1D in time and 1D in space. 
We decompose the error $\fterror$ in to its various components as presented in Theorem \ref{thm:par_dd_err}. The effects of varying the fine and coarse time scales are considered in sections \ref{sec:example_ParDD_fine_time_scale} and \ref{sec:example_ParDD_coarse_time_scale} respectively. The number of domain decomposition iterations is varied in section \ref{sec:example_ParDD_DD_iterations}, the number of spatial subdomains in section \ref{sec:example_ParDD_spatial_subdomains}, and the degree of overlap of the spatial subdomains in \ref{sec:example_ParDD_overlap}. In all examples the effectivity ratio of the error estimator is 1.00. For these examples, we set $\nu = 4, \mu = 2$ in the function $f(x)$ defined by equation \eqref{eq:temporal_errors_test_rhs}. The Richardson parameter used in spatial domain decomposition iterations was always set to $\tau=0.4$.

Once again the examples not only provide convincing evidence of the accuracy of the error estimator,
but illustrate the complex interplay that can occur between different contributions to the overall  error.
We frequently observe second order effects of a refinement strategy on error components other than
those the strategy is directly targeting. These second order effects are largely inconsequential
if the contributions to the overall error have widely different magnitude, but may become important
when the error components are similar in scale and particularly when they are opposite in sign.

% \subsubsection{Effect of Parareal Iterations}

% %%%TODO: REDO THIS EXPERIMENT.
% \begin{table}[!ht]
% \centering
% \begin{tabular}{c|c|c|c|c|c|c|c}
% \toprule
% $\NParIte$ & True Err & Err. Est. & $\gamma$  & $\fterror$ & $\Kterror$ & $\cterror$ & $\auxerror$   \\
% \midrule
% 1 &  0.02619049 &  0.0261907 & 1.00e+00 & -0.0078674  &  0.0340579  &     0        & 2.72272e-07\\
% 2 & -0.0081665  &  -0.008167 & 1.00e+00 & -0.00743838 & -0.0007285  & -6.37351e-07 & 3.41565e-07\\
% \bottomrule
% \end{tabular}
% \caption{ Effect of parareal iterations: $\Nctime$ = 20, $\Nftime$ = 80, $\NParSub$ = 10, $\Qctime$ = 1, $\Qftime$ = 1,   $\Ncspace$ = 20, $\Nfspace$ = 20, $\NDDSub$ = 2,  $\beta$ = 0.2, $\NDDIte$ = 2, $\Qcspace$ = 1, $\Qfspace$ = 2, $\Qadjspace$ = 3}
% \label{tab:2}
% \end{table}

\subsubsection{Effect of the fine time scale}
\label{sec:example_ParDD_fine_time_scale}

Consistent with the results in \S \ref{sec:example_Par_fine_time_scale}, decreasing the temporal time step decreases the temporal component $\fterrft$ of the discretization error. Notice that for this example the number of spatial elements has been increased so that the temporal discretization error is dominant. All other error contributions in Table \ref{tab:example_ParDD_fine_time_scale} are largely unaffected.

%\begin{table}[H]
%\centering
%\begin{tabular}{c|c|c|c|c|c|c|c|c}
%\toprule
%\reffactime & Est. Err. & $\gamma$ &  $\fterrft$ & $\fterrNs$ & $\fterrKs$ & $\Kterror$ & $\cterror$ & $\auxerror$  \\
%\midrule
%2& 6.61e-01 &  1.00e+00 &  2.16e-01 &  1.95e-05 &  4.49e-01 &  -1.10e-04 & -4.94e-03 & -4.52e-05\\
%4& 8.15e-01 &  1.00e+00 &  1.95e-01 &  2.19e-05 &  6.24e-01 &  -1.23e-04 & -3.89e-03 & -4.61e-05\\
%\bottomrule
%\end{tabular}
%\caption{Effect of fine time scale: $\Nctime$ = 20,$\NParSub$ = 10, $\NParIte$ = 2, $\Qctime$ = 1, $\Qftime$ = 1,   $\Ncspace$ = 20, $\NDDSub$ = 2, $\NDDIte$=2, $\beta$ = 0.2, $\Qcspace$ = 1, $\Qfspace$ = 2, $\Qadjspace$ = 3 }
%\label{tab:example_ParDD_fine_time_scale}
%\end{table}

\begin{table}[H]
\centering
\begin{tabular}{c|c|c|c|c|c|c|c|c}
\toprule
\reffactime & Est. Err. & $\gamma$ &  $\fterrft$ & $\fterrNs$ & $\fterrKs$ & $\Kterror$ & $\cterror$ & $\auxerror$  \\
\midrule
2& 2.42e-01 &  1.00e+00 &  2.45e-01 &  6.94e-08 &  1.11e-02 &  7.98e-04 &  1.23e-02 &  -2.58e-02\\
4& 1.55e-01 &  1.00e+00 &  1.48e-01 &  6.93e-08 &  1.37e-02 &  1.70e-03 &  1.95e-02 &  -2.65e-02\\
8& 1.05e-01 &  1.00e+00 &  8.34e-02 &  7.32e-08 &  2.51e-02 &  2.12e-03 &  2.30e-02 &  -2.68e-02\\
\bottomrule
\end{tabular}
\caption{$\Nctime$ = 10,$\NParSub$ = 10, $\NParIte$ = 2,    $\Ncspace$ = 80, $\NDDSub$ = 2, $\NDDIte$=8, $\beta$ = 0.2, $\Qcspace$ = 1, $\Qfspace$ = 2 }
\label{tab:example_ParDD_fine_time_scale}
\end{table}

\subsubsection{Effect of the coarse time scale}
\label{sec:example_ParDD_coarse_time_scale}

Again, consistent with the results in section \ref{sec:example_Par_coarse_time_scale}, all temporal components of the error decrease as the coarse time scale is decreased. The spatial error components in Table \ref{tab:example_ParDD_coarse_time_scale} are seen to be largely  unaffected since the number of spatial elements has been chosen so that temporal errors dominate.
%\begin{table}[H]
%\centering
%\begin{tabular}{c|c|c|c|c|c|c|c|c}
%\toprule
%$\Nctime$ & Est. Err. & $\gamma$ &  $\fterrft$ & $\fterrNs$ & $\fterrKs$ & $\Kterror$ & $\cterror$ & $\auxerror$  \\
%\midrule
%10&   6.04e-01 &  1.00e+00 &  2.36e-01 &  1.93e-05 &  3.84e-01 &  -3.65e-05 & 9.87e-03 &  -2.47e-02\\
%20&   6.61e-01 &  1.00e+00 &  2.16e-01 &  1.95e-05 &  4.49e-01 &  -1.10e-04 & -4.94e-03 & -4.52e-05\\
%\bottomrule
%\end{tabular}
%\caption{Effect of coarse time scale: $\reffactime$ = 2, $\NParSub$ = 10, $\NParIte$ = 2, $\Qctime$ = 1, $\Qftime$ = 1,   $\Ncspace$ = 20, $\NDDSub$ = 2, $\NDDIte$=2, $\beta$ = 0.2, $\Qcspace$ = 1, $\Qfspace$ = 2, $\Qadjspace$ = 3 }
%\label{tab:example_ParDD_coarse_time_scale}
%\end{table}

\begin{table}[H]
\centering
\begin{tabular}{c|c|c|c|c|c|c|c|c}
\toprule
$\Nctime$ & Est. Err. & $\gamma$ &  $\fterrft$ & $\fterrNs$ & $\fterrKs$ & $\Kterror$ & $\cterror$ & $\auxerror$  \\
\midrule
10&   2.42e-01 &  1.00e+00 &  2.45e-01 &  6.94e-08 &  1.11e-02 &  7.98e-04 &  1.23e-02 &  -2.58e-02\\
20&   1.55e-01 &  1.00e+00 &  1.41e-01 &  6.55e-08 &  1.46e-02 &  -9.88e-07 & -5.09e-04 & -4.81e-05\\
40&   1.05e-01 &  1.00e+00 &  7.95e-02 &  6.89e-08 &  2.54e-02 &  -4.12e-07 & -8.73e-06 & -3.05e-07\\
\bottomrule
\end{tabular}
\caption{$\reffactime$ = 2, $\NParSub$ = 10, $\NParIte$ = 2,    $\Ncspace$ = 80, $\NDDSub$ = 2, $\NDDIte$=8, $\beta$ = 0.2, $\Qcspace$ = 1, $\Qfspace$ = 2 }
\label{tab:example_ParDD_coarse_time_scale}
\end{table}

\subsubsection{Effect of the number of domain decomposition iterations}
\label{sec:example_ParDD_DD_iterations}

The spatial iteration error $\fterrKs$ decreases with number of domain decomposition iterations, as shown in Table \ref{tab:example_ParDD_DD_iterations}, while the spatial and temporal discretization errors remain essentially constant. There is a second order effect of decreasing the temporal iterative and coarse time scale error contributions since these error contributions also have a spatial component.

\begin{table}[H]
\centering
\begin{tabular}{c|c|c|c|c|c|c|c|c}
\toprule
$\NDDIte$ & Est. Err. & $\gamma$ &  $\fterrft$ & $\fterrNs$ & $\fterrKs$ & $\Kterror$ & $\cterror$ & $\auxerror$  \\
\midrule
2& 6.61e-01 &  1.00e+00 &  2.16e-01 &  1.95e-05 &  4.49e-01 &  -1.10e-04 & -4.94e-03 & -4.52e-05\\
6& 1.88e-01 &  1.00e+00 &  1.45e-01 &  1.71e-05 &  4.40e-02 &  -1.84e-05 & -1.31e-03 & -4.87e-05\\
\bottomrule
\end{tabular}
\caption{$\Nctime$ = 20, $\reffactime$ = 2, $\NParSub$ = 10, $\NParIte$ = 2,   $\Ncspace$ = 20, $\NDDSub$ = 2, $\beta$ = 0.2, $\Qcspace$ = 1, $\Qfspace$ = 2 }
\label{tab:example_ParDD_DD_iterations}
\end{table}

\subsubsection{Effect of the number of spatial subdomains}
\label{sec:example_ParDD_spatial_subdomains}

The spatial iteration error $\fterrKs$ increases with number of spatial subdomains, while the spatial and temporal discretization errors remain essentially constant. A second order effect is again apparent in Table \ref{tab:example_ParDD_spatial_subdomains}, where the temporal iterative error is see to also increase due to its spatial component.

\begin{table}[H]
\centering
\begin{tabular}{c|c|c|c|c|c|c|c|c}
\toprule
$\NDDSub$ & Est. Err. & $\gamma$ &  $\fterrft$ & $\fterrNs$ & $\fterrKs$ & $\Kterror$ & $\cterror$ & $\auxerror$  \\
\midrule
2& 7.17e-01 &  1.00e+00 &  2.25e-01 &  1.02e-06 &  4.96e-01 &  -8.83e-06 & -4.51e-03 & -4.46e-05\\
4& 1.23e+00 &  1.00e+00 &  3.13e-01 &  1.26e-06 &  9.18e-01 &  3.84e-05 &  -5.76e-03 & -4.43e-05\\
\bottomrule
\end{tabular}
\caption{$\Nctime$ = 20, $\reffactime$ = 2, $\NParSub$ = 10, $\NParIte$ = 2,    $\Ncspace$ = 40, $\NDDIte$=2, $\beta$ = 0.1, $\Qcspace$ = 1, $\Qfspace$ = 2 }
\label{tab:example_ParDD_spatial_subdomains}
\end{table}

\subsubsection{Effect of spatial domain overlap}
\label{sec:example_ParDD_overlap}

Increasing the degree of overlap between the spatial domains is expected to decrease the spatial iterative error $\fterrKs$ while leaving the spatial and temporal discretization errors largely unchanged. Slightly different behavior is observed in Table \ref{tab:example_ParDD_overlap} for this example, perhaps because the temporal discretization error is orders of magnitude larger than the spatial discretization error. Never-the-less, the error estimator is accurate and the effectivity ratio is 1.00.

\begin{table}[H]
\centering
\begin{tabular}{c|c|c|c|c|c|c|c|c}
\toprule
$\beta$ & Est. Err. & $\gamma$ &  $\fterrft$ & $\fterrNs$ & $\fterrKs$ & $\Kterror$ & $\cterror$ & $\auxerror$  \\
\midrule
 0.1& 7.17e-01 &  1.00e+00 &  2.25e-01 &  1.02e-06 &  4.96e-01 &  -8.83e-06 & -4.51e-03 & -4.46e-05\\
 0.2& 6.61e-01 &  1.00e+00 &  2.16e-01 &  1.95e-05 &  4.49e-01 &  -1.10e-04 & -4.94e-03 & -4.52e-05\\
\bottomrule
\end{tabular}
\caption{$\Nctime$ = 20, $\reffactime$ = 2, $\NParSub$ = 10, $\NParIte$ = 2,   $\Ncspace$ = 20, $\NDDSub$ = 2, $\NDDIte$=2, $\Qcspace$ = 1, $\Qfspace$ = 2 }
\label{tab:example_ParDD_overlap}
\end{table}

\subsection{Results for a different time integrator for the Time parallel Algorithm}
\label{sec:numerical_results_Par_CG}

We briefly demonstrate the accuracy of the \emph{a posteriori} error estimates when the continuous Galerkin method, cG($\Qftime$) (see section~\ref{sec:var_mthds}), is employed as the time integration scheme in the fine and coarse scale solvers for the TPA. The approximation space for the coarse and fine scales on the the space-time slab $S^p_{n}$ are  $\Vspacetime{n}{\Qctime}{\Qcspace}$ and $\Vspacetime{n}{\Qftime}{\Qfspace}$ respectively. The results of Theorems~\ref{thm:par_err} and \ref{thm:par_dd_err} remain valid; however, the definition of the residuals involved need to be modified to reflect the cG method. The residual for the cG method is given in \eqref{eq:res_cg_dg}. The results are qualitative similar to those in section~\ref{sec:numerical_results_Par}, and hence we present them without any comment. The aim is to show that the results are still accurate, and that the analysis is applicable to a broad class of numerical methods. The results are given in Tables~\ref{tab:CG_example_Par_iterations}, \ref{tab:CG_example_Par_subdomains}, \ref{tab:CG_example_Par_fine_time_scale}, \ref{tab:CG_example_Par_coarse_time_scale} and \ref{tab:CG_example_Par_space_scale}.

\begin{table}[H]
\centering
\begin{tabular}{c|c|c|c|c|c|c}
\toprule
$\NParIte$ & Est. Err. & $\gamma$ &  $\fterror$ & $\Kterror$ & $\cterror$ & $\auxerror$  \\
\midrule
1&  1.02e-01 & 1.00e+00 &  -4.46e-02 &  1.47e-01 &  0.00e+00 & 1.79e-04\\
2& -6.34e-02 & 1.00e+00 &  -3.86e-02 & -2.48e-02 & -1.99e-04 & 1.80e-04\\
3& -3.81e-02 & 1.00e+00 &  -3.96e-02 &  1.50e-03 & -1.67e-04 & 1.80e-04\\
\bottomrule
\end{tabular}
\caption{$\Nctime$ = 10, $\reffactime$ = 4, $\NParSub$ = 10, $\Qctime$ = 1, $\Qftime$ = 1,   $\Ncspace$ = 20, $\Qcspace$ = 1, $\Qfspace$ = 2 }
\label{tab:CG_example_Par_iterations}
\end{table}

\begin{table}[H]
\centering
\begin{tabular}{c|c|c|c|c|c|c}
\toprule
$\NParSub$ & Est. Err. & $\gamma$ &  $\fterror$ & $\Kterror$ & $\cterror$ & $\auxerror$  \\
\midrule
2& -3.95e-02 & 1.00e+00 &  -3.95e-02 & 3.86e-07 &  1.28e-07 &  1.20e-08\\
5& -3.96e-02 & 1.00e+00 &  -3.95e-02 & -2.55e-05 & 6.94e-05 &  -7.27e-05\\
10&   -6.34e-02 & 1.00e+00 &  -3.86e-02 & -2.48e-02 & -1.99e-04 & 1.80e-04\\
\bottomrule
\end{tabular}
\caption{$\Nctime$ = 10, $\reffactime$ = 4, $\NParIte$ = 2, $\Qctime$ = 1, $\Qftime$ = 1,   $\Ncspace$ = 20, $\Qcspace$ = 1, $\Qfspace$ = 2 }
\label{tab:CG_example_Par_subdomains}
\end{table}

\begin{table}[H]
\centering
\begin{tabular}{c|c|c|c|c|c|c}
\toprule
\reffactime & Est. Err. & $\gamma$ &  $\fterror$ & $\Kterror$ & $\cterror$ & $\auxerror$  \\
\midrule
2& -1.71e-01 & 1.00e+00 &  -1.53e-01 & -1.82e-02 & -1.57e-04 & 1.78e-04\\
4& -6.34e-02 & 1.00e+00 &  -3.86e-02 & -2.48e-02 & -1.99e-04 & 1.80e-04\\
\bottomrule
\end{tabular}
\caption{$\Nctime$ = 10, $\NParSub$ = 10, $\NParIte$ = 2, $\Qctime$ = 1, $\Qftime$ = 1,   $\Ncspace$ = 20, $\Qcspace$ = 1, $\Qfspace$ = 2 }
\label{tab:CG_example_Par_fine_time_scale}
\end{table}

\begin{table}[H]
\centering
\begin{tabular}{c|c|c|c|c|c|c}
\toprule
$\Nctime$ & Est. Err. & $\gamma$ &  $\fterror$ & $\Kterror$ & $\cterror$ & $\auxerror$  \\
\midrule
10&   -1.71e-01 & 1.00e+00 &  -1.53e-01 & -1.82e-02 & -1.57e-04 & 1.78e-04\\
20&   -4.10e-02 & 1.00e+00 &  -3.95e-02 & -1.55e-03 & -5.17e-07 & 6.50e-07\\
\bottomrule
\end{tabular}
\caption{$\reffactime$ = 2, $\NParSub$ = 10, $\NParIte$ = 2, $\Qctime$ = 1, $\Qftime$ = 1,   $\Ncspace$ = 20, $\Qcspace$ = 1, $\Qfspace$ = 2 }
\label{tab:CG_example_Par_coarse_time_scale}
\end{table}

\begin{table}[H]
\centering
\begin{tabular}{c|c|c|c|c|c|c}
\toprule
$\Ncspace$ & Est. Err. & $\gamma$ &  $\fterror$ & $\Kterror$ & $\cterror$ & $\auxerror$  \\
\midrule
   5& 3.73e-02 &  1.00e+00 &  3.69e-02 &  1.68e-10 &  -1.95e-05 & 8.85e-04\\
  10& 5.55e-03 &  1.00e+00 &  5.54e-03 &  1.40e-10 &  -7.21e-07 & 4.15e-05\\
  20& -1.93e-03 & 1.00e+00 &  -1.93e-03 & 1.33e-10 &  -6.45e-07 & 2.55e-07\\
\bottomrule
\end{tabular}
\caption{$\Nctime$ = 20, $\reffactime$ = 6, $\NParSub$ = 10, $\NParIte$ = 6, $\Qctime$ = 1, $\Qftime$ = 1,   $\Qcspace$ = 1, $\Qfspace$ = 1 }
\label{tab:CG_example_Par_space_scale}
\end{table}

\section{Conclusions and future work}
\label{sec:conclusions}

We first developed an accurate adjoint-based \emph{a posteriori error} analysis for the Time Parallel Algorithm, which applies the  Parareal  method in time for the solution of parabolic partial differential equations. This analysis  does not seek to separate spatial and temporal sources of error, but combines the two as ``discretization'' error. Additional error contributions arise due to incomplete iteration, discontinuities in the coarse forward solution, and the fine adjoint solution when it is solved in parallel. We then extended this analysis to the Space-Time Parallel Algorithm  by assuming that the spatial solution is determined through a second iterative method, in this case domain decomposition. The combined analysis is able to disaggregate the spatial and temporal discretization errors, as well as  identify additional iterative errors resulting from the domain decomposition iteration in space. Thus the analysis presented here provides a basis for separating discretization and iteration errors and for estimating the effects of incomplete iteration in both space and time. Accurate error estimates provide a foundation for adaptivity and are essential for accurate uncertainty quantification where it is necessary to distinguish variation due to parameter changes from  variation  due to numerical errors which can also vary across the parameter domain. 

We have limited the analysis to linear problems and intend to extend these results  to nonlinear problems using linearization techniques that have previously proven effective~\cite{chaudhry2016posteriori}. We  also investigate more sophisticated temporal solvers than backward Euler and a simple cG method. Parallel iterative methods for solving PDEs require a large number of discretization choices. The error analysis developed here, which accurately distinguishes multiple sources of error, provides a sound foundation on which to make many of these choices. Finally, we can investigate adaptive strategies, noting the complex interaction between error components.

Waveform relaxation methods \cite{gander2007optimized, gander2013parareal, gander1998space} make a fundamentally different choice when combining the two iterative methods of domain decomposition and Parareal iteration. 
Assume  we wish to solve a parabolic partial differential equation on $\Omega \times (0,T]$ and let $\Omega_i \subset \Omega, i=1, \dots p$ be a set of overlapping (spatial) subdomains. Waveform relaxation methods consider domain decomposition as the outer iteration and employs Parareal iteration (or some other time integration technique) to solve subproblems on each spatio-temporal subdomain $\Omega_i \times (0,T], i=1, \dots p$ independently, and then perform a domain decomposition like iteration on the $p$ spatial-temporal blocks. Eficient implementations of waveform relaxation require additional computation to determine Robin conditions for the boundaries of subdomains. An analysis of this approach is saved for future work, starting with an \emph{ a posteriori} error analysis for waveform relaxation implementing a simple, discontinuous Galerkin method for time integration and then extending to Parareal integration in time.

\section*{Acknowledgments}
J. Chaudhry’s work is supported by the NSF-DMS 1720402.
S. Tavener's work is supported by NSF-DMS 1720473.
D. Estep's work is supported by NSF-DMS 1720473 and NSERC grants.
%\listoftodos

\bibliographystyle{plain}
\bibliography{refs_parareal_pde}
\appendix

\section{Notation}
\label{sec:appendix}

\noindent{\bf Variables, errors and residuals}
%\label{app:variables}

\begin{table}[H]
\centering
\begin{tabular}{||l|l||}
\toprule
& Solutions \\
\midrule
$u$                     & Analytic solution \\
$U^p$                   & Fine scale discrete solution on the $p$th temporal subdomain                                 \\
$\widehat{U}^p$         & Coarse scale discrete solution on the $p$th temporal subdomain                               \\
$U^{p,\{k_t\}}$         & Fine scale discrete solution on the $p$th temporal subdomain after $k_t$ parareal iterations \\
$U^{p,\{k_t, k_s\}}$    & Fine scale discrete solution on the $p$th temporal subdomain after $k_t$ parareal iterations \\
                        & \enskip and $k_s$ domain decomposition iterations   \\
%$\widehat{U}^{p,\{m\}}$ & Coarse scale discrete solution on the $p$th temporal subdomain after $m$ parareal iterations \\
\midrule
& Errors and residuals \\
\midrule
$e^p$                      & Fine scale error on the $p$th temporal subdomain                      \\
$\widehat{e}^p$            & Coarse scale error on the $p$th temporal subdomain                    \\
$\mathcal{R}^p$            & Fine scale residual on the $p$th temporal subdomain                   \\
$\widehat{\mathcal{R}}^p$  & Coarse scale residual on the $p$th temporal subdomain                 \\
$\widehat{\mathcal{R}}_{\rm aux}^p$  & Auxilary adjoint problem residual on the $p$th temporal subdomain   \\
\midrule
& Adjoints \\
\midrule
$\phi$              & Fine scale adjoint solution                                   \\
$\phi^p$            & Fine scale adjoint solution on the $p$th temporal subdomain   \\
$\widehat{\phi}$    & Coarse scale adjoint solution                                 \\
$\widehat{\phi}_{\rm aux}^p$ & Auxiliary adjoint solution on the $p$th temporal subdomain    \\
\bottomrule
\end{tabular}
\caption{Notation: Variables, errors and residuals}
\label{tab:notation_variables}
\end{table}

\bigskip
\noindent{\bf Discretization choices}
%\label{app:discretization}

\begin{table}[H]
\centering
\begin{tabular}{||l|l||}
\toprule
& Time \\
\midrule
$N_t$             & Number of fine scale temporal finite elements                           \\
$\widehat{N}_t$   & Number of coarse scale temporal finite elements                         \\
$\reffactime$     & $N_t/\widehat{N}_t$. Refinement factor for time                         \\
$q_t$             & Interpolation degree of fine scale temporal finite elements             \\
$\widehat{q}_t$   & Interpolation degree of coarse scale temporal finite elements           \\
%$\widehat{q}_{t,{\rm adj}}$ & Interpolation degree of finite element for temporal adjoint solutions   \\
$P_t$             & Number of parareal subdomains (synchronization intervals)               \\
$K_t$             & Number of parareal iterations                                           \\
\midrule
& Space \\
\midrule
$N_s$             & Number of fine scale spatial finite elements                           \\
$\widehat{N}_s$   & Number of coarse scale spatial finite elements                         \\
$q_s$             & Interpolation degree of fine scale spatial finite elements             \\
$\widehat{q}_s$   & Interpolation degree of coarse scale spatial finite elements           \\
%$\widehat{q}_{s,{\rm adj}}$ & Interpolation degree of finite element for spatial adjoint solutions   \\
$P_s$             & Number of spatial subdomains                                           \\
$K_s$             & Number of domain decomposition iterations                              \\
$\beta$           & Amount of spatial overlap                                              \\
\bottomrule
\end{tabular}
\caption{Notation: Discretization choices}
\label{tab:notation_discrization_choices}
\end{table}

\bigskip
\noindent{\bf Time discretization}
%\label{app:time_discretization}

\begin{table}[H]
\centering
\begin{tabular}{||l|l||}
\toprule
$T_{p-1}$  & Left hand end of pth time subdomain \\
$T_p$      & Right hand end of $p$th time subdomain \\
\midrule
& Course scale solution \\
\midrule
$\widehat{N}^p_t$      & Number of coarse scale temporal finite elements in $p$th time subdomain   \\
$\widehat{t}^p_{j-1}$  & Left hand end of $j$th coarse time interval in $p$th time subdomain       \\
$\widehat{t}^p_j $     & Right hand end of $j$th coarse time interval in $p$th time subdomain      \\
$\widehat{\mathcal{I}}^p_j$ & $[ \widehat{t}^p_{j-1}, \widehat{t}^p_j ]$                           \\
$\widehat{\mathcal{T}}^p$ & $\left \{ \widehat{\mathcal{I}}^p_1, \dots, \widehat{\mathcal{I}}^p_j, \dots, \widehat{\mathcal{I}}^p_{\widehat{N}^p_t} \right\} $                                              \\
\midrule
& Fine scale solution \\
\midrule
$N^p_t$      & Number of fine scale temporal finite elements in $p$th time subdomain     \\
$t^p_{j-1}$  & Left hand end of $j$th fine time interval in $p$th time subdomain         \\
$t^p_{j}$    & Right hand end of $j$th fine time interval in $p$th time subdomain        \\
$\mathcal{I}^p_j$      & $[ t^p_{j-1}, t^p_j ]$                                          \\
$\mathcal{T}^p$ & $\left \{ \mathcal{I}^p_1, \dots, \mathcal{I}^p_j, \dots, \mathcal{I}^p_{N^p_t} \right \}$  \\
\bottomrule
\end{tabular}
\caption{Notation: Time discretization}
\label{tab:notation_time_discretization}
\end{table}

\section{Standard Parareal algorithm and equivalence}
\label{sec:appendix_Parareal}

% \subsection{Parareal standard algorithm}

The standard Parareal algorithm only defines the solutions at the times $T_p$~\cite{Maday08}. This algorithm is given in Algorithm~\ref{alg:parareal_standard}
%\eqref{eq:parareal_standard}~\cite{Maday08,chaudhry2016posteriori}. 
Here $\widetilde{U}_p^{(k_t)} \in \Vspace{\Qcspace}$ and $ \overline{U}_p^{(k_t)} \in \Vspace{\Qfspace}$ are the coarse and fine scale solutions at $T_{p}$ at iteration $k_t$. 
% The spatial finite element spaces $\Vspace{\Qcspace}$ and $ \Vspace{\Qfspace}$ are defined precisely in \S \ref{sec:time_space_discretization}.

% For $k_t = 1, \ldots, \NParIte$
% \begin{equation}
% \label{eq:parareal_standard}
% \left .
% \begin{gathered}\begin{aligned}
%   \widetilde{U}_p^{(k_t)} &= \widehat{G}^p \left[ \widetilde{U}_{p-1}^{(k_t)} \right] (T_p) + C_p^{k_t-1} \\
%    \overline{U}_p^{(k_t)} &= F^p \left[\widetilde{U}_{p-1}^{(k_t)} \right] (T_p)  \\
%                  C_p^{k_t}  &= \overline{U}_p^{(k_t)} - \widehat{G}^p \left[\widetilde{U}_{p-1}^{(k_t)} \right] (T_p)
% \end{aligned}\end{gathered}
% \qquad \right\}
% \end{equation}

\begin{algorithm}[H]\caption{Standard form of the Parareal algorithm}
\label{alg:parareal_standard}
\begin{algorithmic}
\Procedure{PAR}{$\NParSub, \NParIte, \csoln_0$}
\State $\corr{p}{0} = 0, p=0,\ldots,\NParSub$ \Comment{Initialize corrections}
\For{$k_t = 1, \ldots, \NParIte$}
  \State $\widetilde{U}_{0}^{(k_t)} := \csoln_0$  \Comment{Set initial conditions}
  \For{$p=1, \ldots, \NParSub$}
    \State $\widetilde{U}_p^{(k_t)} = \widehat{G}^p \left[ \widetilde{U}_{p-1}^{(k_t)} \right] (T_p) + C_p^{k_t-1} $
           \Comment{Serial computation}
    \State $\overline{U}_p^{(k_t)} = F^p \left[\widetilde{U}_{p-1}^{(k_t)} \right] (T_p)$
           \Comment{Parallel computation}
    \State $C_p^{k_t}  = \overline{U}_p^{(k_t)} - \widehat{G}^p \left[\widetilde{U}_{p-1}^{(k_t)} \right] (T_p)$
           \Comment{Update corrections}
  \EndFor
\EndFor
% \State \Return $\fsolnpk{p}{K_t}(x,t)$
\EndProcedure
\end{algorithmic}
\end{algorithm}

\begin{thm}
The standard Parareal algorithm in Algorithm~\ref{alg:parareal_standard} and the variational version  in Algorithm \ref{alg:parareal_variational} are equivalent in the sense that,
\begin{align}
\widetilde{U}_p^{(\NParIte)}=&\csolnpk{p}{\NParIte}(x,T_p) + \corr{p}{\NParIte-1}\\
\overline{U}_p^{(\NParIte)}=&\fsolnpk{p}{\NParIte}(x,T_p)\\
\corr{p}{\NParIte}=&\corr{p}{\NParIte}
\end{align}
\end{thm}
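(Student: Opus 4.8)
The natural route is a double induction: an outer induction on the Parareal iteration counter $k_t = 1, \dots, \NParIte$ and, for each fixed $k_t$, an inner induction on the temporal subdomain index $p = 0, 1, \dots, \NParSub$. At each pair $(k_t,p)$ one must establish three facts simultaneously: that the standard coarse value $\widetilde{U}_p^{(k_t)}$ equals the variational coarse value $\csolnpk{p}{k_t}(x,T_p)$ plus the stored correction $\corr{p}{k_t-1}$; that the standard fine value $\overline{U}_p^{(k_t)}$ equals $\fsolnpk{p}{k_t}(x,T_p)$; and that the correction $C_p^{k_t}$ produced by Algorithm~\ref{alg:parareal_standard} coincides with $\corr{p}{k_t}$ produced by Algorithm~\ref{alg:parareal_variational}. (The third displayed identity in the statement, as typeset, reduces to the trivial $\corr{p}{\NParIte}=\corr{p}{\NParIte}$; it should be read as the assertion that the two algorithms compute the \emph{same} corrections, which is exactly what the induction supplies.)

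For the base case $k_t = 1$ all corrections are initialized to zero, so both algorithms collapse to a single serial coarse sweep followed by independent fine solves launched from the common data $\csoln_0$; here the three identities are immediate from the definitions of the coarse and fine solvers. For the inductive step, fix $k_t \geq 2$ and $p \geq 1$ and assume the three identities for all $(k_t-1,\cdot)$ and for $(k_t,p')$ with $p' < p$. One then unwinds the serial update $\widetilde{U}_p^{(k_t)} = \csolver{\widetilde{U}_{p-1}^{(k_t)}}{T_p} + C_p^{k_t-1}$ of Algorithm~\ref{alg:parareal_standard} against the serial update $\csolnpk{p}{k_t}(x,t) = \csolver{\csolnpk{p-1}{k_t}(x,T_{p-1}) + \corr{p}{k_t-1}(x)}{t}$ of Algorithm~\ref{alg:parareal_variational}, uses the inner inductive hypothesis to compare the initial data fed to $\widehat{G}^p$ in the two algorithms, and reads off the claimed relation at $t = T_p$. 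The analogous comparison of the fine-solver lines (both take the coarse value at $T_{p-1}$ as initial data) yields $\overline{U}_p^{(k_t)} = \fsolnpk{p}{k_t}(x,T_p)$, and then the two correction-update lines give $C_p^{k_t} = \corr{p}{k_t}$, closing the induction.

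The main obstacle is purely bookkeeping, but genuinely delicate: the two formulations place the correction in different spots. Algorithm~\ref{alg:parareal_standard} adds $C_p^{k_t-1}$ to the \emph{endpoint} value $\widetilde{U}_p$, whereas Algorithm~\ref{alg:parareal_variational} folds $\corr{p}{k_t-1}$ into the \emph{initial condition} of the coarse solver on subdomain $p$. Reconciling these hinges on the fact that the implicit-Euler propagators defined by \eqref{eq:be_step_coarse}--\eqref{eq:be_step_fine} are affine in their initial data, so that $\widehat{G}^p[\alpha + c](t) - \widehat{G}^p[\alpha](t)$ depends linearly on $c$ and is independent of the source term $l$, and likewise for $F^p$; one must carry these homogeneous contributions faithfully through both recursions so that the correction terms line up exactly as written. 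This reconciliation is precisely the equivalence argument worked out in \cite{chaudhry2016posteriori}, to which we refer for the remaining routine details.
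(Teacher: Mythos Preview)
The paper gives no proof in the text; it simply defers to \cite{chaudhry2016posteriori}, which is the same reference you invoke at the end of your sketch. Your double-induction skeleton on $(k_t,p)$ is the correct structure and is what the cited work carries out.

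One comment on your ``main obstacle.'' If the correction index in the coarse-update line of Algorithm~\ref{alg:parareal_variational} is read as $p-1$ rather than $p$ (which appears to be a typographical slip as printed), then the inner inductive hypothesis $\widetilde{U}_{p-1}^{(k_t)} = \csolnpk{p-1}{k_t}(x,T_{p-1}) + \corr{p-1}{k_t-1}$ says precisely that the two algorithms feed \emph{identical} initial data to both $\widehat{G}^p$ and $F^p$ on subdomain $p$. The three identities at level $p$ then drop out by direct substitution, with no need to split off homogeneous parts or invoke linearity of the solvers in the initial data. Conversely, with the indices taken literally as typeset, the affine decomposition you propose does not actually close the induction: carrying it through leads to the requirement $L^p\corr{p-1}{k_t-1} = L^p\corr{p}{k_t-1}$ for the homogeneous propagator $L^p$, which fails in general. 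So the linearity you highlight is a red herring; the argument in the reference is the direct one once the bookkeeping is aligned.
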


For proof, please see \cite{chaudhry2016posteriori}.

\section{Coarse scale error}
\label{sec:appendix_coarse_error}

While the focus of the article is on the fine scale error, we briefly outline the result for the coarse scale error for completeness. Let $\cerrpk{p}{k_t} = \soln -  \csolnpk{p}{k_t}$. Then,

\begin{thm}
\begin{eqnarray}
\label{eq:coarse_err}
(\psi_T, \cerrpk{\NParSub}{k_t}(T))   =  \sum_{p=1}^{\NParSub} \crespuphi{p}{\csolnpk{p}{k_t}}{\cadj} - \sum_{p=1}^{\NParSub-1} (\cadj(T_{p}),\corr{p}{k_t-1})
    + (\cadj(0), \cerrpk{1}{k_t}(0)).
\end{eqnarray}
\end{thm}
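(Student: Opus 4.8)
The plan is to derive the coarse-scale error representation by the same two-step procedure used for the fine-scale analysis in Theorem~\ref{thm:par_err}, but applied directly to the coarse-scale solutions $\csolnpk{p}{k_t}$ and the coarse-scale adjoint $\cadj$ from \eqref{eq:coarse_adjoint}. First I would invoke the accumulation identity \eqref{eq:cres} with $\adjoint = \cadj$ on each temporal subdomain $p$. Since $\cadj$ solves the adjoint equation globally on $(0,T]$, there is no ``initial condition transfer'' between subdomains for the adjoint (unlike the fine-scale case, where $\fadjp{p}$ differs from $\cadj$ on $[T_{p-1},T_p]$); this is what makes the coarse-scale result simpler and free of auxiliary adjoint terms. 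Applying \eqref{eq:cres} gives, for each $p$,
\begin{equation}
(\cadj, \cerrpk{p}{k_t})(T_p) = (\cadj, \cerrpk{p}{k_t})(T_{p-1}) + \crespuphi{p}{\csolnpk{p}{k_t}}{\cadj}.
\end{equation}

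Next I would telescope these identities over $p = 1, \ldots, \NParSub$ and isolate $(\cadj(T), \cerrpk{\NParSub}{k_t}(T)) = (\psi_T, \cerrpk{\NParSub}{k_t}(T))$, using $\cadj(T)=\psi$. The telescoping leaves residual sums $\sum_{p=1}^{\NParSub}\crespuphi{p}{\csolnpk{p}{k_t}}{\cadj}$, an initial term $(\cadj(0),\cerrpk{1}{k_t}(0))$, and a collection of mismatch terms at the interior synchronization points $T_p$, $p = 1,\ldots,\NParSub-1$. These mismatch terms arise because the coarse-scale solution is \emph{not} continuous across $T_p$: on subdomain $p+1$ the coarse solver is restarted from $\csolnpk{p}{k_t}(T_p) + \corr{p}{k_t-1}$ rather than from $\csolnpk{p}{k_t}(T_p)$ (see Algorithm~\ref{alg:parareal_variational}). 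Hence the jump in the coarse error at $T_p$ is exactly $-\corr{p}{k_t-1}$, so that $(\cadj, \cerrpk{p+1}{k_t})(T_p) - (\cadj, \cerrpk{p}{k_t})(T_p) = -(\cadj(T_p), \corr{p}{k_t-1})$. Substituting this and collecting terms yields precisely
\begin{equation}
(\psi_T, \cerrpk{\NParSub}{k_t}(T)) = \sum_{p=1}^{\NParSub} \crespuphi{p}{\csolnpk{p}{k_t}}{\cadj} - \sum_{p=1}^{\NParSub-1} (\cadj(T_p),\corr{p}{k_t-1}) + (\cadj(0), \cerrpk{1}{k_t}(0)).
\end{equation}

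The main obstacle — really the only nontrivial point — is correctly tracking the bookkeeping of the jump terms: one must be careful that the ``outgoing'' value $\cerrpk{p}{k_t}(T_p)$ from subdomain $p$ and the ``incoming'' value $\cerrpk{p+1}{k_t}(T_p)$ for subdomain $p+1$ differ exactly by the correction $\corr{p}{k_t-1}$, and that the sign and index ($k_t-1$ versus $k_t$) of the correction match the definitions in Algorithm~\ref{alg:parareal_variational}. Everything else is a routine telescoping argument of the kind already carried out for the fine-scale theorem, so I would present the jump identity explicitly and then state that summation and rearrangement give the claim, referring back to the analogous manipulations in the proof of Theorem~\ref{thm:par_err}.
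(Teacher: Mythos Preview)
Your proposal is correct and follows precisely the route the paper intends: the paper's own proof consists only of the remark that it is ``similar to the proof of Theorem~\ref{thm:par_err}'', and your argument---applying the coarse accumulation identity \eqref{eq:cres} with the global coarse adjoint $\cadj$, telescoping over $p$, and identifying the interior jumps $\csolnpk{p+1}{k_t}(T_p)-\csolnpk{p}{k_t}(T_p)=\corr{p}{k_t-1}$ from Algorithm~\ref{alg:parareal_variational}---is exactly that. Your observation that no auxiliary adjoints are needed here because $\cadj$ is defined globally (so there is no $\fadjp{p}-\cadj$ mismatch) is the right explanation for why the coarse-scale representation is simpler than \eqref{eq:err_decomp_parareal}.
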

The proof  is similar to the proof of Theorem \ref{thm:par_err}.

\end{document}